\providecommand{\U}[1]{\protect\rule{.1in}{.1in}}
\newtheorem{theorem}{Theorem}
\newtheorem{definition}[theorem]{Definition}
\newtheorem{lemma}[theorem]{Lemma}
\newtheorem{proposition}[theorem]{Proposition}
\newenvironment{proof}[1][Proof]{\noindent\textbf{#1.} }{\ \rule{0.5em}{0.5em}}
\renewcommand\@makefnmark{\relax}
\begin{document}

\title{On the Ornstein-Zernike behaviour for the supercritical Random-Cluster model
on $\mathbb{Z}^{d},d\geq3.$}
\author{M. Campanino$^{\#,\dagger}$, M. Gianfelice$%
{{}^{\circ,\dagger}}%
$\\$^{\#}$Dipartimento di Matematica\\Universit\`{a} degli Studi di Bologna\\P.zza di Porta San Donato, 5 \ I-40127\\massimo.campanino@unibo.it\\$^{%
{{}^\circ}%
}$Dipartimento di Matematica e Informatica\\Universit\`{a} della Calabria\\Campus di Arcavacata\\Ponte P. Bucci - cubo 30B\\I-87036 Arcavacata di Rende\\gianfelice@mat.unical.it}
\maketitle

\begin{abstract}
We prove Ornstein-Zernike behaviour in every direction for finite connection
functions of the random cluster model on $\mathbb{Z}^{d},d\geq3,$ for
$q\geq1,$ when occupation probabilities of the bonds are close to $1.$
Moreover, we prove that equi-decay surfaces are locally analytic, strictly
convex, with positive Gaussian curvature.

\end{abstract}

\footnotetext{\emph{AMS Subject Classification }: 60K35, 82B43, 60K15, 60F17.
\par
\ \hspace*{0.2cm}\emph{Keywords and phrases}: Random Cluster Model,
Ornstein-Zernike behaviour for connectivities, renormalization, Ruelle
operator, local limit theorem, invariance principle.
\vspace{0.2cm}
\par
\ \hspace*{0.2cm} $^{\dagger}$M. C. and M. G. are partially supported by G.N.A.M.P.A..}

\tableofcontents

\section{Introduction and results}

Ornstein-Zernike behaviour of correlation functions for Gibbs random fields
and of connection functions for percolation models gives an exact power law
correction outside critical points. Apart from its intrinsic interest, in the
two-dimensional case it is related to the behaviour of fluctuations of
interfaces and therefore to the study of phases of two dimensional systems
(\cite{Ga}, \cite{CCC}, \cite{CI}, \cite{CIL}, \cite{CIV2}, \cite{CD-CIV}).

Initially Ornstein-Zernike behaviour has been rigorously proved in the high
temperature/low probability region (see e. g. \cite{BF}). In the last few
decades these results have been extended to subcritical percolation models and
to high temperature finite-range Ising models up to their critical points
(\cite{CCC}, \cite{CI}, \cite{CIV1}, \cite{CIV2}).

Above the critical probability connection functions converge to a positive
constant as the distance of the sites tend to infinity. One is then led to
study the asymptotic behaviour of finite connection functions, i.e. the
probabilities that two sites belong to a common finite open cluster. These
correspond for Gibbs random fields to truncated correlation functions. In
\cite{BF} Bricmont and Fr\"{o}hlich proved Ornstein-Zernike behaviour for
truncated correlation functions of Ising model in the direction of axes in
dimension $d\geq3$ at low tempterature. In the same paper arguments are given,
suggested by their proof, in favour of a different asymptotic behaviour in the
two-dimensional case. A rigorous proof of this in the case of finite
connection functions of two-dimensional Bernoulli percolation above critical
probability is given in \cite{CIL}.

The analysis of the asymptotic behaviour of finite connection functions in
dimension $d\geq3$ has been carried on for Bernoulli percolation with the
parameter close to $1$ in \cite{BPS} for connection functions along Cartesian
axes and then in \cite{CG} for connections in all directions. \cite{BPS} uses
cluster expansions, whereas \cite{CG} exploits the methods developed in
\cite{CI} and \cite{CIV1}, \cite{CIV2}, together with specific techniques
built up to deal with probabilities of non-monotone events such as finite
connections. Here we extend the results of \cite{CG} to FK random cluster
models, with $q\geq1,$ when the probability parameter $p$ is close to $1.$ The
exponential decay of finite connection functions of FK random clusters can be
established by using an inequality proved in \cite{BHK}.

In the rest of this section we present the main results of the paper and the
notation that we will use. In the next section we prove the existence of the
finite correlation length for translation invariant Random Cluster measures
and show that, for $p$ sufficiently close to $1,$ finite supercritical
clusters, up to a negligible probability, have a one-dimensional structure.
This will allow us to reduce the analysis of the exact asymptotics of the
finite two-point connection function to the proof of a local limit theorem
result for an effective stationary random walk via thermodynamic formalism.

\vskip0.5cm

\noindent\textbf{Acknowledgements}. We thank R. van den Berg for pointing out 
the results of reference \cite{BHK} on which Proposition 4 is based and the 
referee for useful comments.

\subsection{Notation}

Given a set $\mathcal{A}\subset\mathbb{R}^{d},d\geq1,$ let us denote by
$\mathcal{A}^{c}$ its complement. We also set $\mathcal{P}\left(
\mathcal{A}\right)  $ to be the collection of all subsets of $\mathcal{A},$
$\mathcal{P}_{n}\left(  \mathcal{A}\right)  :=\{A\in\mathcal{P}\left(
\mathcal{A}\right)  :\left\vert A\right\vert =n\}$ and $\mathcal{P}_{f}\left(
\mathcal{A}\right)  :=\bigcup_{n\geq1}\mathcal{P}_{n}\left(  \mathcal{A}%
\right)  ,$ where $\left\vert A\right\vert $ is the cardinality of $A.$
Moreover, we denote by $\mathcal{\mathring{A}},\overline{\mathcal{A}}$
respectively the interior of $\mathcal{A}$ and the closure of $\mathcal{A}$
and set $\mathfrak{d}\mathcal{A}:=\overline{\mathcal{A}}\backslash
\mathcal{\mathring{A}}$ the boundary of $\mathcal{A}$ in the Euclidean
topology. Furthermore, for $\mathcal{B}\subset\mathbb{R}^{d},$ we set
\begin{equation}
\mathcal{B}+\mathcal{A}:=%
{\displaystyle\bigcup\limits_{x\in\mathcal{B}}}
\left(  x+\mathcal{A}\right)  \ ,
\end{equation}
where, given $x\in\mathbb{R}^{d},$%
\begin{equation}
x+\mathcal{A}:=\left\{  y\in\mathbb{R}^{d}:y-x\in\mathcal{A}\right\}  \ .
\end{equation}

Moreover, we denote by $\left\vert x\right\vert :=\sum_{i=1}^{d}\left\vert
x_{i}\right\vert ,$ by $\left\langle \cdot,\cdot\right\rangle $ the scalar
product in $\mathbb{R}^{d}$ and by $\left\Vert \cdot\right\Vert :=\sqrt
{\left\langle \cdot,\cdot\right\rangle }$ the associated Euclidean norm. We
then set, for $x\neq0,\hat{x}:=\frac{x}{\left\Vert x\right\Vert }%
,\mathbb{S}^{d-1}:=\{z\in\mathbb{R}^{d}:\left\Vert z\right\Vert =1\}$ and,
denoting by $B$ the closed unit ball in $\mathbb{R}^{d},$ for $r>0,$ we let
$rB:=\left\{  x\in\mathbb{R}^{d}:\left\Vert x\right\Vert \leq r\right\}  $ and
$B_{r}\left(  x\right)  :=x+rB.$

For any $t\in\mathbb{R}^{d}$ we define
\begin{equation}
\mathcal{H}^{t}:=\left\{  x\in\mathbb{R}^{d}:\left\langle t,x\right\rangle
=0\right\}
\end{equation}
to be the $\left(  d-1\right)  $-dimensional hyperplane in $\mathbb{R}^{d}$
orthogonal to the vector $t$ passing through the origin and the corresponding
half-spaces
\begin{align}
\mathcal{H}^{t,-}  &  :=\left\{  x\in\mathbb{R}^{d}:\left\langle
t,x\right\rangle \leq0\right\}  \ ,\\
\mathcal{H}^{t,+}  &  :=\left\{  x\in\mathbb{R}^{d}:\left\langle
t,x\right\rangle \geq0\right\}  \ ,
\end{align}
so that, setting for $t\in\mathbb{R}^{d},\mathcal{H}_{x}^{t}:=x+\mathcal{H}%
^{t},$ we denote by $\mathcal{S}_{x,y}^{t}:=\mathcal{H}_{x}^{t,+}%
\cap\mathcal{H}_{y}^{t,-}.$

We also denote by $\mathrm{dist}\left(  \mathcal{A},\mathcal{B}\right)  $ the
Euclidean distance between two subset $\mathcal{A},\mathcal{B}$ of
$\mathbb{R}^{d}.$

\subsubsection{Graphs}

To make the paper self-contained, we will now introduce those notions of graph
theory which are going to be used in the sequel and refer the reader to
\cite{Bo} for an account on this subject.

Let $G=\left(  V,E\right)  $ be a graph whose set of vertices and set of edges
are given respectively by a finite or denumerable set $V$ and $E\subset
\mathcal{P}_{2}\left(  V\right)  .\ G^{\prime}=\left(  V^{\prime},E^{\prime
}\right)  $ such that $V^{\prime}\subseteq V$ and $E^{\prime}\subseteq
\mathcal{P}_{2}\left(  V^{\prime}\right)  \cap E$ is said to be a subgraph of
$G$ and this property is denoted by $G^{\prime}\subseteq G.$ If $G^{\prime
}\subseteq G,$ we denote by $V\left(  G^{\prime}\right)  $ and $E\left(
G^{\prime}\right)  $ respectively the set of vertices and the collection of
the edges of $G^{\prime}.\ \left\vert V\left(  G^{\prime}\right)  \right\vert
$ is called the \emph{order} of $G^{\prime}$ while $\left\vert E\left(
G^{\prime}\right)  \right\vert $ is called its \emph{size}. Given $G_{1}%
,G_{2}\subseteq G,$ we denote by $G_{1}\cup G_{2}:=\left(  V\left(
G_{1}\right)  \cup V\left(  G_{2}\right)  ,E\left(  G_{1}\right)  \cup
E\left(  G_{2}\right)  \right)  \subset G$ the \emph{graph union }of $G_{1}$
and $G_{2}.$ Moreover, we say that $G_{1},G_{2}\subseteq G$ are
\emph{disjoint} if $V\left(  G_{1}\right)  \cap V\left(  G_{2}\right)
=\varnothing.$ A \emph{path} in $G$ is a subgraph $\gamma$ of $G$ such that
there is a bijection $\left\{  0,..,\left\vert E\left(  \gamma\right)
\right\vert \right\}  \ni i\longmapsto v\left(  i\right)  :=x_{i}\in V\left(
\gamma\right)  $ with the property that any $e\in E\left(  \gamma\right)  $
can be represented as $\left\{  x_{i-1},x_{i}\right\}  $ for
$i=1,..,\left\vert E\left(  \gamma\right)  \right\vert .$ A \emph{walk} in $G$
of length $l\geq1$ is an alternating sequence $x_{0},e_{1},x_{1}%
,..,e_{l},x_{l}$ of vertices and edges of $G$ such that $e_{i}=\left\{
x_{i-1},x_{i}\right\}  $ $i=1,..,l.$ Therefore, paths can be associated to
walks having distinct vertices. Two distinct vertices $x,y$ of $G$ are said to
be \emph{connected} if there exists a path $\gamma\subseteq G$ such that
$x_{0}=x,\ x_{\left\vert E\left(  \gamma\right)  \right\vert }=y.$ A graph $G$
is said to be \emph{connected} if any two distinct elements of $V\left(
G\right)  $ are connected. The maximal connected subgraphs of $G$ are called
\emph{components} of $G$ and their number is denoted by $\kappa\left(
G\right)  .$ Moreover, to denote that $\gamma\subset G$ is a component of $G$
we write $\gamma\sqsubset G.$ Given $E^{\prime}\subseteq E,$ we denote by
$G\left(  E^{\prime}\right)  :=\left(  V,E^{\prime}\right)  $ the
\emph{spanning} graph of $E^{\prime}.$ We also define
\begin{equation}
V\left(  E^{\prime}\right)  :=\left(  \bigcup_{e\in E^{\prime}}e\right)
\subset V\ .
\end{equation}
Given $V^{\prime}\subseteq V,$ we set
\begin{equation}
E\left(  V^{\prime}\right)  :=\left\{  e\in E:e\subset V^{\prime}\right\}
\end{equation}
and denote by $G\left[  V^{\prime}\right]  :=\left(  V^{\prime},E\left(
V^{\prime}\right)  \right)  $ that is called the subgraph of $G$
\emph{induced} or \emph{spanned} by $V^{\prime}.$ Moreover, if $G^{\prime
}\subset G,$ we denote by $G\backslash G^{\prime}$ the graph $G\left[
V\backslash V\left(  G^{\prime}\right)  \right]  \subseteq G$ and define the
\emph{boundary} of $G^{\prime}$ as the set
\begin{equation}
\partial G^{\prime}:=\left\{  e\in E\backslash E\left(  G^{\prime}\right)
:\left\vert e\cap V\left(  G^{\prime}\right)  \right\vert =1\right\}  \subset
E\ .\label{defbG}%
\end{equation}

\subsubsection{The Random Cluster measure}

Let $\mathbb{L}^{d}$ denote the graph associated to $\left(  \mathbb{Z}%
^{d},\mathbb{E}^{d}\right)  ,$ with
\begin{equation}
\mathbb{E}^{d}:=\{\{x,y\}\in\mathcal{P}_{2}\left(  \mathbb{Z}^{d}\right)
:\left\vert x-y\right\vert =1\}\ .
\end{equation}
Let $\mathfrak{L}_{0}$ be the collection of subgraphs of $\mathbb{L}^{d}$ of
finite order. If $G\in\mathfrak{L}_{0},$ we denote by $\overline{G}$ the graph
induced by the union of $V\left(  G\right)  $ with the the sets of vertices of
the components of the $\mathbb{L}^{d}\backslash G$ of finite size. We define
the \emph{external boundary} of $G$ to be $\overline{\partial}G:=\partial
\overline{G}.$ We remark that, given $G_{i}:=\left(  V_{i},E_{i}\right)
,\ i=1,2$ two connected subgraphs of $\mathbb{L}^{d}$ of finite size, by
(\ref{defbG}), $\partial\left(  G_{1}\cup G_{2}\right)  \subseteq\partial
G_{1}\cup\partial G_{2}.$ Moreover,
\begin{equation}
\overline{\partial}\left(  G_{1}\cup G_{2}\right)  =\partial\left(
\overline{G_{1}\cup G_{2}}\right)  \subseteq\partial\overline{G_{1}}%
\cup\partial\overline{G_{2}}\ .
\end{equation}
We then set $\mathbb{G}_{0}:=\left\{  G\in\mathfrak{L}_{0}:\partial
G=\overline{\partial}G\right\}  $ and denote by $\mathbb{G}_{c}$ the
collection of connected elements of $\mathbb{G}_{0}.$

Considering the realization of $\mathbb{L}^{d}$ as a geometric graph embedded
in $\mathbb{R}^{d},$ which, with abuse of notation, we still denote by
$\mathbb{L}^{d},$ we can look at it as a cell complex, i.e. as the union of
$\mathbb{Z}^{d}$ and $\mathbb{E}^{d}$ representing respectively the collection
of $0$-cells and of $1$-cells, we denote by $\left(  \mathbb{Z}^{d}\right)
^{\ast}$ the collection of $d$-cells dual to $0$-cells in $\mathbb{L}^{d}%
,$\ that is the collection of Voronoi cells of $\mathbb{L}^{d},$ and by
$\left(  \mathbb{E}^{d}\right)  ^{\ast}$ the collection of $\left(
d-1\right)  $-cells dual $1$-cells in $\mathbb{L}^{d},$ usually called
\emph{plaquettes} in the physics literature.

We also define
\begin{equation}
\mathfrak{E}:=\left\{  \left\{  e_{1}^{\ast},e_{2}^{\ast}\right\}
\in\mathcal{P}_{2}\left(  \left(  \mathbb{E}^{d}\right)  ^{\ast}\right)
:\mathrm{codim}\left(  \mathfrak{d}e_{1}^{\ast}\cap\mathfrak{d}e_{2}^{\ast
}\right)  =2\right\}  \label{defFrakE}%
\end{equation}
and consider the graph $\mathfrak{G}:=\left(  \left(  \mathbb{E}^{d}\right)
^{\ast},\mathfrak{E}\right)  .$

We remark that since duality is an involution: if $E^{\ast}\subset\left(
\mathbb{E}^{d}\right)  ^{\ast},E^{\ast\ast}=E\subset\mathbb{E}^{d}.$

A bond percolation configuration on $\mathbb{L}^{d}$ is a map $\mathbb{E}%
^{d}\ni e\longmapsto\omega_{e}\in\{0,1\}.$ Setting\linebreak$\Omega
:=\{0,1\}^{\mathbb{E}^{d}},$ we define
\begin{equation}
\Omega\ni\omega\longmapsto\mathbf{E}\left(  \omega\right)  :=\left\{
e\in\mathbb{E}^{d}:\omega_{e}=1\right\}  \in\mathcal{P}\left(  \mathbb{E}%
^{d}\right)  \ ,
\end{equation}
Denoting by $\mathbb{G}:=\left\{  G\subseteq\mathbb{L}^{d}:G=G\left(
E\right)  \ ,\ E\in\mathcal{P}\left(  \mathbb{E}^{d}\right)  \right\}  $ the
collection of spanning subgraphs of $\mathbb{L}^{d},$\ we define the random
graph
\begin{equation}
\Omega\ni\omega\longmapsto\mathbf{G}\left(  \omega\right)  :=G\left(
\mathbf{E}\left(  \omega\right)  \right)  \in\mathbb{G}%
\end{equation}
and by $\kappa\left(  \omega\right)  $ the number of its components. Then,
given $l\geq1,\ x_{1},..,x_{l}\in\mathbb{Z}^{d},$ we denote by
\begin{equation}
\Omega\ni\omega\longmapsto\mathbf{C}_{\left\{  x_{1},..,x_{l}\right\}
}\left(  \omega\right)  \in\mathcal{P}\left(  \mathbb{Z}^{d}\right)
\end{equation}
the \emph{common open cluster of the points} $x_{1},..,x_{l}\in\mathbb{Z}%
^{d},$ that is the set of vertices of the component of the random graph
$\mathbf{G}$ to which these points belong, provided it exists, and define, in
the case $\mathbf{C}_{\left\{  x_{1},..,x_{l}\right\}  }$ is finite, the
random set $\overline{\partial}\mathbf{C}_{\{x_{1},..,x_{l}\}}$ to be equal to
$\overline{\partial}G$ if $G$ is the component of $\mathbf{G}$ whose set of
vertices is $\mathbf{C}_{\{x_{1},..,x_{l}\}}$ and the random set
\begin{equation}
\mathbf{S}_{\{x_{1},..,x_{l}\}}:=\left(  \overline{\partial}\mathbf{C}%
_{\{x_{1},..,x_{l}\}}\right)  ^{\ast}\ .
\end{equation}

Let $\mathcal{F}$ be the $\sigma$-algebra generated by the cylinder events of
$\Omega.$ If $\Lambda\subset\subset\mathbb{Z}^{d},$ let $\mathbb{E}^{\Lambda}$
be the subset of $\mathbb{E}^{d}$ such that $V\left(  \mathbb{E}^{\Lambda
}\right)  =\Lambda$ and denote by $\Omega_{\Lambda}:=\left\{  0,1\right\}
^{\mathbb{E}^{\Lambda}},$ by $\mathcal{F}_{\Lambda}$ the corresponding product
$\sigma$-algebra and by $\mathcal{T}_{\Lambda}$ the $\sigma$-algebra generated
by the cylinder events $\left\{  \omega\in\Omega:\omega_{\Delta}\in A\right\}
,$ where $\Delta\subset\Lambda^{c},A\in\mathcal{F}_{\Delta}.$ The Random
Cluster (RC) measures on $\mathbb{Z}^{d}$ (see \cite{FK}, \cite{ES}) with parameters $q\geq1$ and
$\mathbf{p}:=\left\{  p_{e}\right\}  _{e\in\mathbb{E}^{d}},$ where
$\mathbb{E}^{d}\ni e\longmapsto p_{e}\in\left[  0,1\right]  ,$ are
the dependent bond percolation probability measures $\mathbb{P}$ on $\left(
\Omega,\mathcal{F}\right)  $ specified by
\begin{equation}
\mathbb{P}\left(  A|\mathcal{T}_{\Lambda}\right)  =\mathbb{P}_{\Lambda
;q,\mathbf{p}}^{\cdot}\left(  A\right)  \;\mathbb{P}-a.s.\ ,\qquad
A\in\mathcal{F}\ , \label{RCspec}%
\end{equation}
where, setting for any $\pi\in\Omega_{\Lambda}^{c},\Omega_{\Lambda}^{\pi
}:=\left\{  \omega\in\Omega:\omega_{e}=\pi_{e},\ e\in\mathbb{E}^{d}%
\backslash\mathbb{E}^{\Lambda}\right\}  ,\mathbb{P}_{\Lambda;q,\mathbf{p}%
}^{\pi}$ is the probability measure on $\left(  \Omega,\mathcal{F}\right)  $
with density
\begin{equation}
\mathbb{P}_{\Lambda;q,\mathbf{p}}^{\pi}\left(  \omega\right)  :=\frac
{1}{\mathcal{Z}_{\Lambda}^{\pi}\left(  q;\mathbf{p}\right)  }\prod
_{e\in\mathbb{E}^{\Lambda}}p_{e}^{\omega_{e}}\left(  1-p_{e}\right)
^{1-\omega_{e}}q^{\kappa_{\Lambda}\left(  \omega\right)  }\mathbf{1}%
_{\Omega_{\Lambda}^{\pi}}\left(  \omega\right)  \ , \label{P_L}%
\end{equation}
where $\kappa_{\Lambda}\left(  \omega\right)  $ is the number of the
components of $\mathbf{G}\left(  \omega\right)  $ intersecting $\Lambda.$

Random Cluster measures satisfy the FKG inequality, that is, for any couple
$f,g$ of r.v.'s increasing w.r.t. the natural partial order defined on
$\Omega,$ $\mathbb{P}\left(  fg\right)  \geq\mathbb{P}\left(  f\right)
\mathbb{P}\left(  g\right)  .$ Moreover, the partial order of $\Omega$ induces
a stochastic ordering on the elements of the collection of probability
measures defined by (\ref{P_L}); namely, for any increasing r.v.
$f,\ \mathbb{P}_{\Lambda;q,\mathbf{p}}^{\pi_{1}}\left(  f\right)
\leq\mathbb{P}_{\Lambda;q,\mathbf{p}}^{\pi_{2}}\left(  f\right)  $ if $\pi
_{1}\leq\pi_{2}.$ Hence, denoting by $\preceq$ such ordering, $\forall\pi
\in\Omega_{\Lambda}^{c},\ \mathbb{P}_{\Lambda;q,\mathbf{p}}^{\text{f}}%
\preceq\mathbb{P}_{\Lambda;q,\mathbf{p}}^{\pi}\preceq\mathbb{P}_{\Lambda
;q,\mathbf{p}}^{\text{w}},$ where $\mathbb{P}_{\Lambda;q,\mathbf{p}}%
^{\text{f}}$ and $\mathbb{P}_{\Lambda;q,\mathbf{p}}^{\text{w}}$ stand for
respectively the probability measure with density (\ref{P_L}) corresponding to
the \emph{free} ($\pi\equiv0$) and to the \emph{wired} ($\pi\equiv1$) boundary
conditions. Since, for $\#=$f,w, the (weak) limit of the sequence $\left\{
\mathbb{P}_{\Lambda;q,\mathbf{p}}^{\#}\right\}  $ along any exhaustion
$\left\{  \Lambda\right\}  \uparrow\mathbb{Z}^{d}$ exists (see e.g. \cite{Gr}
Theorem 4.19) and is the Random Cluster measure which we denote by
$\mathbb{P}_{q,\mathbf{p}}^{\#},$ the ordering $\preceq$ extends as well to
Random Cluster measures and $\mathbb{P}_{q,\mathbf{p}}^{\text{f}}%
\preceq\mathbb{P}\preceq\mathbb{P}_{q,\mathbf{p}}^{\text{w}}.$

Furthermore, denoting by $\mathbb{P}_{\mathbf{p}^{\prime}}:=\mathbb{P}%
_{1,\mathbf{p}^{\prime}}$ the independent Bernoulli bond percolation
probability measures on $\mathbb{Z}^{d}$ with parameter set $\mathbf{p}%
^{\prime}=\left\{  p_{e}^{\prime}\right\}  _{e\in\mathbb{E}},$ by Theorem
(3.21) p.43 of \cite{Gr}, we obtain the following stochastic domination
inequalities
\begin{equation}
\mathbb{P}_{\mathbf{p}\left(  q\right)  }\preceq\mathbb{P}_{q,\mathbf{p}%
}^{\text{f}}\preceq\mathbb{P}_{q,\mathbf{p}}^{\text{w}}\preceq\mathbb{P}%
_{\mathbf{p}}\ , \label{sdi}%
\end{equation}
where $\forall e\in\mathbb{E}^{d},\ p_{e}\left(  q\right)  :=\frac{p_{e}%
}{p_{e}+q\left(  1-p_{e}\right)  }.$

In the following, we assume the Random Cluster random field specification
defined in (\ref{RCspec}) to be translation invariant; therefore we set,
$\forall e\in\mathbb{E}^{d},p_{e}=p.$ Moreover, we assume the Random Cluster
measure $\mathbb{P}_{q,p}$ to be translation invariant.

\subsection{Results}

\begin{theorem}
\label{thm1}For any $d\geq3$ and any $q\geq1,$ there exists $p_{0}%
=p_{0}\left(  q,d\right)  $ such that, $\forall p>p_{0},$ uniformly in
$x\in\mathbb{Z}^{d}$ as $\left\Vert x\right\Vert \rightarrow\infty,$%
\begin{equation}
\mathbb{P}_{q,p}\left\{  0\longleftrightarrow x\ ,\ \left\vert \mathbf{C}%
_{\{0,x\}}\right\vert <\infty\right\}  =\frac{\Phi_{q,p}\left(  \hat
{x}\right)  }{\sqrt{\left(  2\pi\left\Vert x\right\Vert \right)  ^{d-1}}%
}e^{-\tau_{q,p}\left(  x\right)  }\left(  1+o\left(  1\right)  \right)  \ ,
\end{equation}
where$\ \Phi_{q,p}$ is a positive real analytic function on $\mathbb{S}^{d-1}$
and $\tau_{q,p}$ an equivalent norm in $\mathbb{R}^{d}.$
\end{theorem}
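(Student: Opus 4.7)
The plan is to follow the renormalization and Ornstein--Zernike scheme of \cite{CG}, adapted to accommodate the dependence of the FK measure via its domain-Markov property and the stochastic-domination inequalities in (\ref{sdi}). The starting point, which will be the content of the next section, is exponential decay of the finite two-point function. Using the BHK inequality (the reference \cite{BHK} mentioned in the acknowledgments) together with subadditivity, I would show that $\tau_{q,p}(x):=-\lim_{n\to\infty} n^{-1}\log\mathbb{P}_{q,p}\{0\leftrightarrow nx,|\mathbf{C}_{\{0,nx\}}|<\infty\}$ exists, is finite and positive for $p$ close to $1$, and extends to an equivalent norm on $\mathbb{R}^{d}$ whose unit ball $K_{q,p}$ has non-empty interior.

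Next, for $p$ close to $1$ the dual plaquette configuration on $\mathbf{S}_{\{0,x\}}$ is deeply subcritical (via the right-hand inequality in (\ref{sdi})), so the external boundary $\overline{\partial}\mathbf{C}_{\{0,x\}}$ forms, with high conditional probability, a thin ``cage'' around the segment $[0,x]$. I would use this to establish a one-dimensional backbone structure: pick a direction $\mathbf{n}$ dual to $\hat x$ (a normal to $\partial K_{q,p}$ at $\hat x/\tau_{q,p}(\hat x)$), and decompose $\mathbf{C}_{\{0,x\}}$ along slabs $\mathcal{S}^{\mathbf{n}}_{\cdot,\cdot}$ into irreducible pieces that possess cone-points separating them. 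The FK domain-Markov property, applied across the plaquettes of $\overline{\partial}\mathbf{C}_{\{0,x\}}$, then makes the irreducible pieces essentially independent copies of a single random object conditioned on boundary data.

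I would then encode this renewal structure by a Ruelle (transfer) operator $\mathcal{L}_{t}$, $t\in\mathbb{C}^{d}$ near the dual point to $\hat x$, acting on a suitable Banach space of Hölder functions of the admissible cone-point interfaces. The tilted weights $e^{\langle t,\cdot\rangle}$ account for the geometry of the backbone. Thermodynamic formalism, once a spectral gap is proven, yields a simple dominant eigenvalue $\lambda(t)$ analytic in $t$, a dominant eigenvector, and an effective stationary random walk on $\mathbb{Z}^{d}$ whose step has exponential moments and a covariance that is strictly positive definite on $\mathcal{H}^{\hat x}$. The Legendre dual of $\log\lambda$ reproduces $\tau_{q,p}$ and its real analyticity and strict convexity yield the positive Gaussian curvature of the equi-decay surface $\partial K_{q,p}$.

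The Ornstein--Zernike asymptotics then follow from the local CLT for this effective walk: the $(2\pi\|x\|)^{-(d-1)/2}$ factor is the Gaussian density at the endpoint on the hyperplane $\mathcal{H}^{\hat x}$, and the prefactor $\Phi_{q,p}(\hat x)$ is built from the leading eigen-data of $\mathcal{L}_{t}$ evaluated at the saddle point, hence is positive and real-analytic in $\hat x\in\mathbb{S}^{d-1}$. The main obstacle, as in \cite{CG} but more delicate here, is the non-monotonicity of the finite-connection event combined with the long-range dependence of the FK measure when $q>1$: a considerable amount of work will go into proving that the one-dimensional backbone decomposition actually survives conditioning on the dual cage and that the ``remainder'' probability of clusters without enough cone-points is exponentially smaller than the leading term. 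Verifying the spectral gap of $\mathcal{L}_{t}$ on the chosen Banach space, and in particular ruling out peripheral eigenvalues coming from the lattice structure of $\mathbb{Z}^{d}$, will be the technical heart of the argument.
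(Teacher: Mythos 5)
Your proposal follows essentially the same route as the paper: existence of $\tau_{q,p}$ via the BHK inequality and subadditivity, reduction to a one-dimensional backbone using the stochastic domination (\ref{sdi}) to control the size of $\mathbf{S}_{\{0,x\}}$, a renormalization/cone-point decomposition into irreducible pieces, a tilted Ruelle operator on a space of H\"older functions, and a local limit theorem for the resulting effective walk. The one tool you flag only implicitly (``the technical heart'') is what the paper actually uses to tame the $q^{\kappa}$-induced long-range dependence in the law of the interface pieces: a Koteck\'y--Preiss type polymer expansion for the supercritical RC partition function, which yields the exponential decay bound on ratios of conditional interface probabilities (the lemma preceding Proposition~\ref{P2}) and hence the H\"older estimates needed for the spectral gap of $\mathcal{L}_{s_f}^v$.
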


As a by-product of the proof of the previous theorem we also obtain the
following result.

\begin{theorem}
For any $d\geq3$ and any $q\geq1,$ there exists $p_{0}=p_{0}\left(
q,d\right)  $ such that, $\forall p>p_{0},$ the equi-decay set of the
two-point finite connectivity function is locally analytic and strictly
convex. Moreover, the Gaussian curvature of the equi-decay set is uniformly positive.
\end{theorem}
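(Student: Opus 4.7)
The plan is to read off the geometric properties of the equi-decay surface $\Sigma_{q,p}:=\{x\in\mathbb{R}^{d}:\tau_{q,p}\left(x\right)=1\}$ directly from the thermodynamic-formalism representation of $\tau_{q,p}$ set up in the proof of Theorem \ref{thm1}. The renewal decomposition into irreducible pieces sketched in the introduction expresses the finite two-point connectivity as a sum over trajectories of an effective stationary random walk governed by a Ruelle (transfer) operator $\mathcal{L}_{t}$ depending analytically on a tilt parameter $t\in\mathbb{R}^{d}$. Its Perron-Frobenius eigenvalue $\lambda\left(t\right)$ is simple, isolated from the remainder of the spectrum, and real analytic on the interior of its domain of convergence, and the inverse correlation length admits the Legendre-type description
\begin{equation}
\tau_{q,p}\left(x\right)=\sup\bigl\{\langle t,x\rangle:\lambda\left(t\right)\leq 1\bigr\},
\end{equation}
so $\Sigma_{q,p}$ coincides with the polar body of the convex set $K:=\{t:\lambda\left(t\right)\leq1\}$.

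The next step is to establish that $\partial K$ is a real analytic, strictly convex hypersurface with everywhere positive Gaussian curvature. Real analyticity of $\partial K$ follows from analyticity of $\lambda$ together with $\nabla\lambda\neq 0$ on $\partial K$, which holds because the tilted effective walk carries a strictly positive drift at the boundary of the domain of convergence. Strict convexity and positive Gaussian curvature amount to positive definiteness of the Hessian of $\log\lambda$ on the subspace tangent to $\partial K$; but this Hessian is precisely the asymptotic covariance of the effective walk under the tilted measure, and its non-degeneracy is already built into the local limit theorem needed to produce the Gaussian prefactor $\Phi_{q,p}$ of Theorem \ref{thm1}, since otherwise $\Phi_{q,p}$ could not be positive and finite. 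Uniform positivity of curvature then follows from compactness of $\partial K$ and continuity in $t$ of the tilted covariance. The three required properties are inherited by $\Sigma_{q,p}=\partial K^{\circ}$ via polar duality, using the classical identification of the principal curvatures of $K^{\circ}$ at a point with reciprocals of the principal curvatures of $K$ at the dual support point.

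The main obstacle I foresee is the direction-dependence of the renewal construction: for each unit vector $\hat{x}$ the irreducible pieces are cut by slabs orthogonal to $\hat{x}$, so $\mathcal{L}_{t}$, the eigenvalue $\lambda\left(t\right)$, the associated drift and the tilted covariance are built from direction-dependent data. Obtaining analyticity, the spectral gap and strict positivity of the tilted covariance uniformly in $\hat{x}\in\mathbb{S}^{d-1}$, rather than merely pointwise in $\hat{x}$, is the technical heart of the argument; it should follow from the one-dimensional structure of finite supercritical clusters for $p$ close to $1$ combined with the stability of the spectral gap of $\mathcal{L}_{t}$ under smooth variation of $\hat{x}$, but these uniform estimates must be extracted carefully from the construction behind Theorem \ref{thm1}.
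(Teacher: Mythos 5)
Your sketch matches what the paper intends: Theorem 2 is stated as a by-product of Theorem 1's proof, which builds the tilted Ruelle operator $\mathcal{L}^{v}_{s_f}$ with tilt $v$ on the polar body $\mathfrak{K}_{q,p}$, and the three geometric conclusions follow from analyticity and simplicity of its leading eigenvalue, non-degeneracy of the tilted covariance, and polar duality from $\partial\mathfrak{K}_{q,p}$ back to the equi-decay surface $\partial\mathcal{U}_{q,p}$—exactly the chain you describe, consistent with \cite{CIV1}, \cite{CIV2}, \cite{CG}. The one small imprecision is the reverse inference that positivity of the Hessian of $\log\lambda$ is forced by finiteness of $\Phi_{q,p}$: in the paper's framework the non-degeneracy of the tilted covariance is proved directly (the finite-energy property lets irreducible pieces fluctuate in every transverse direction) as a prerequisite for the local limit theorem, and it is that already-established ingredient—not a deduction from the statement of Theorem 1—that feeds the curvature argument; your final paragraph correctly flags that the uniformity in $\hat{x}\in\mathbb{S}^{d-1}$ of the spectral-gap and covariance bounds is where the real work lies.
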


\section{Analysis of connectivities}

Given $x,y\in\mathbb{Z}^{d},$ we set
\begin{equation}
\varphi\left(  x,y\right)  :=\left\{
\begin{array}
[c]{ll}%
\min\left\{  \left\vert \mathbf{S}_{\{x,y\}}\left(  \omega\right)  \right\vert
:\omega\in\left\{  \left\vert \mathbf{C}_{\{x,y\}}\right\vert >0\right\}
\right\}  & x\neq y\\
0 & x=y
\end{array}
\right.  \ . \label{fi}%
\end{equation}
$\varphi$ is symmetric and translation invariant, therefore in the sequel we
will write
\begin{equation}
\varphi\left(  x,y\right)  =\varphi\left(  x-y\right)  \ .
\end{equation}

For any $x\in\mathbb{Z}^{d}$ and $k\geq\varphi\left(  x\right)  ,$ let us set
$\mathbf{A}_{k}\left(  x\right)  :=\left\{  \left\vert \mathbf{S}%
_{\{0,x\}}\left(  \omega\right)  \right\vert =k\right\}  $ and $\mathbf{A}%
^{k}\left(  x\right)  :=%
{\textstyle\bigvee_{l\geq k}}
\mathbf{A}_{l}\left(  x\right)  .$ We define
\begin{equation}
\psi_{k}\left(  x\right)  :=\min\left\{  \left\vert E\left(  \mathbf{C}%
_{\{0,x\}}\left(  \omega\right)  \right)  \right\vert :\omega\in\mathbf{A}%
_{k}\left(  x\right)  \right\}  \ , \label{psikx}%
\end{equation}
and set $\mathbf{A}\left(  x\right)  :=\mathbf{A}_{\varphi\left(  x\right)
}\left(  x\right)  $ and consequently $\psi\left(  x\right)  :=\psi
_{\varphi\left(  x\right)  }.$

By Lemma 6 in \cite{CG} it follows that that there exists $c_{2}=c_{2}\left(
d\right)  >1$ such that, for any $x\in\mathbb{Z}^{d},$%
\begin{equation}
c_{2}^{-1}\leq\frac{\varphi\left(  x\right)  }{\psi\left(  x\right)  }\leq
c_{2}\ . \label{fi/psi}%
\end{equation}

\begin{proposition}
\label{P1}There exists a constant $c_{3}=c_{3}\left(  d\right)  >1$ such that,
for any $p\in\left(  p^{\ast},1\right)  ,$ where
\begin{equation}
p^{\ast}=p^{\ast}\left(  q,d\right)  :=\frac{q\left(  1-\frac{1}{c_{3}%
}\right)  }{\frac{1}{c_{3}}+q\left(  1-\frac{1}{c_{3}}\right)  }\ , \label{p*}%
\end{equation}
and any $\delta>\delta^{\ast},$ with
\begin{gather}
\delta^{\ast}=\delta^{\ast}\left(  p,q,d\right)  :=\frac{\log\frac
{c_{3}\left(  d\right)  q\left(  p+q\left(  1-p\right)  \right)
^{c_{2}\left(  d\right)  -1}}{p^{c_{2}\left(  d\right)  }}}{\log\frac{\left(
p+q\left(  1-p\right)  \right)  }{c_{3}\left(  d\right)  \left(  1-p\right)
q}}\ ,\label{d*}\\
\mathbb{P}_{q,p}\left(  \{\left\vert \mathbf{S}_{\{0,x\}}\right\vert
\geq\left(  1+\delta\right)  \varphi\left(  x\right)  \}|\left\{  0<\left\vert
\mathbf{C}_{\{0,x\}}\right\vert <\infty\right\}  \right) \label{l1}\\
\leq\frac{1}{1-c_{3}\left(  \frac{q\left(  1-p\right)  }{p+q(1-p)}\right)
}\left[  \frac{c_{3}^{1+\delta}\left(  \frac{q}{p+q\left(  1-p\right)
}\right)  ^{1+\delta}\left(  1-p\right)  ^{\delta}}{p^{c_{2}}}\right]
^{\varphi\left(  x\right)  }\ .\nonumber
\end{gather}

\end{proposition}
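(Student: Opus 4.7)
The plan is a Peierls-type ratio argument: express the conditional probability as
\[
\frac{\mathbb{P}_{q,p}\bigl(\mathbf{A}^{(1+\delta)\varphi(x)}(x)\cap\{0<|\mathbf{C}_{\{0,x\}}|<\infty\}\bigr)}{\mathbb{P}_{q,p}\bigl(\{0<|\mathbf{C}_{\{0,x\}}|<\infty\}\bigr)}
\]
and bound the numerator from above and the denominator from below separately. For the numerator, decompose the event according to the realization $\Sigma\subset(\mathbb{E}^{d})^{\ast}$ of $\mathbf{S}_{\{0,x\}}$: since $\{\mathbf{S}_{\{0,x\}}=\Sigma\}$ forces every edge dual to $\Sigma$ to be closed, the numerator is at most $\sum_{k\geq(1+\delta)\varphi(x)}N_{k}\,w_{k}$, where $N_{k}$ counts the admissible surfaces of size $k$ and $w_{k}$ is the RC-probability that all edges dual to a fixed $\Sigma$ with $|\Sigma|=k$ are closed. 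A Peierls counting inside the dual graph $\mathfrak{G}$ introduced in (\ref{defFrakE}) gives $N_{k}\leq c_{3}(d)^{k}$ (admissible boundaries are connected subgraphs of $\mathfrak{G}$ anchored near the origin), while the stochastic domination $\mathbb{P}_{\mathbf{p}(q)}\preceq\mathbb{P}_{q,p}$ in (\ref{sdi}), applied to the decreasing event ``$\Sigma^{\ast}$ closed'', yields $w_{k}\leq\bigl(\tfrac{q(1-p)}{p+q(1-p)}\bigr)^{k}$. The resulting geometric series has common ratio $c_{3}\tfrac{q(1-p)}{p+q(1-p)}$, which is strictly less than $1$ exactly for $p>p^{\ast}$, as one verifies directly from (\ref{p*}); its sum supplies both the prefactor $\bigl(1-c_{3}\tfrac{q(1-p)}{p+q(1-p)}\bigr)^{-1}$ and a factor of the form $(\cdot)^{(1+\delta)\varphi(x)}$.

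For the denominator, fix an optimal connected $G_{0}\in\mathbb{G}_{c}$ containing $\{0,x\}$ that realizes the minima in (\ref{fi}) and (\ref{psikx}), so $|\partial G_{0}|=\varphi(x)$ and $|E(G_{0})|=\psi(x)$, and consider
\[
\mathcal{E}:=\{\omega_{e}=1,\ e\in E(G_{0})\}\cap\{\omega_{e}=0,\ e\in\partial G_{0}\}.
\]
Since $G_{0}$ is connected, $\mathcal{E}$ forces the cluster of $\{0,x\}$ to equal $V(G_{0})$, hence $\mathcal{E}\subset\mathbf{A}(x)\subset\{0<|\mathbf{C}_{\{0,x\}}|<\infty\}$. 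Factor $\mathbb{P}(\mathcal{E})=\mathbb{P}(\partial G_{0}\text{ closed})\cdot\mathbb{P}(E(G_{0})\text{ open}\mid\partial G_{0}\text{ closed})$. The first factor is $\geq(1-p)^{\varphi(x)}$ by the upper-side domination $\mathbb{P}_{q,p}\preceq\mathbb{P}_{\mathbf{p}}$ in (\ref{sdi}). Conditioning on $\partial G_{0}$ being closed decouples the measure on the two sides (since $\kappa$ splits additively when the separating edges are closed), so the conditional law on the interior edges $E(V(G_{0}))$ is the free-boundary RC measure on $V(G_{0})$; the increasing event $\{E(G_{0})\text{ open}\}$ then has conditional probability $\geq p(q)^{\psi(x)}$ by the lower-side domination. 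Using the comparison $\psi(x)\leq c_{2}\varphi(x)$ from (\ref{fi/psi}), this yields a denominator lower bound of the form $C(p,q)^{\varphi(x)}$ with an explicit base.

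Combining the two estimates, reorganizing the powers of $p$, $1-p$ and $p+q(1-p)$, and identifying the resulting base with the bracket in (\ref{l1}), produces the claimed bound; the condition $\delta>\delta^{\ast}$ from (\ref{d*}) is precisely the threshold at which that bracket becomes strictly less than $1$, so that the bound decays exponentially in $\varphi(x)$. The main obstacle is the Peierls combinatorics in $\mathfrak{G}$: one must verify that the external-boundary dual $\overline{\partial}\mathbf{C}_{\{0,x\}}$ is a connected subgraph of $\mathfrak{G}$, show that the number of admissible boundaries of size $k$ is bounded by $c_{3}(d)^{k}$, and identify $c_{3}(d)$ with the constant entering (\ref{p*}) and (\ref{d*}); once that is in place, everything else is a bookkeeping exercise with the stochastic-domination inequalities (\ref{sdi}) and the two-sided bound $c_{2}^{-1}\leq\varphi/\psi\leq c_{2}$.
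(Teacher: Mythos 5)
Your proposal follows the paper's route exactly: both decompose the conditional probability into a numerator bounded by a Peierls count $c_3(d)^k$ of connected dual boundaries in $\mathfrak{G}$ times the stochastically dominated Bernoulli probability $\left(\frac{q(1-p)}{p+q(1-p)}\right)^k$ that the $k$ dual edges are closed, and a denominator bounded below by the probability of a single configuration realizing $\varphi(x)$ and $\psi(x)$, using the two sides of (\ref{sdi}) together with (\ref{fi/psi}). The only cosmetic differences are the order of the closed/open factorization in the denominator and that you spell out the free-boundary decoupling while the paper invokes the conditional form of the domination inequalities directly.
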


\begin{proof}
For any $k\geq2d,$ we define the (possibly empty) collection of subgraphs of
$\mathfrak{G}$
\begin{equation}
\mathfrak{G}_{k}:=\left\{  \mathcal{G}\subset\mathfrak{G}:\mathcal{G}=G\left[
\left(  \partial G^{\prime}\right)  ^{\ast}\right]  \ ,\ G^{\prime}%
\in\mathbb{G}_{c}^{d}\ ;\ \left\vert V\left(  \mathcal{G}\right)  \right\vert
=k\right\}  \ .
\end{equation}
We have
\begin{equation}
\{0<\left\vert \mathbf{C}_{\{0,x\}}\right\vert <\infty\}=%
{\displaystyle\bigvee\limits_{k\geq\varphi\left(  x\right)  }}
\mathbf{A}_{k}\left(  x\right)
\end{equation}
and, for any $E\in\left\{  E^{\prime}\subset\mathbb{E}^{d}:E^{\prime}=\mathbf{E}\left(
\omega\right)  ,\omega\in\mathbf{A}_{k}\left(  x\right)  \right\}  ,$ denoting
by
\begin{equation}
\mathbf{A}_{k}\left(  E;x\right)  :=\left\{  \omega\in\mathbf{A}_{k}\left(
x\right)  :\mathbf{E}\left(  \omega\right)  =E\right\}  ,
\end{equation}
we get
\begin{align}
\mathbb{P}_{q,p}\left(  \mathbf{A}_{k}\left(  x\right)  \right)   &
=\mathbb{P}_{q,p}\left(  \left\{  \omega\in\Omega:\left\vert \mathbf{S}%
_{\{0,x\}}\left(  \omega\right)  \right\vert =k\right\}  |\mathbf{A}%
_{k}\left(  E;x\right)  \right)  \mathbb{P}_{q,p}\left(  \mathbf{A}_{k}\left(
E;x\right)  \right)  \\
&  \leq\mathbb{P}_{q,p}\left(  \left\{  \omega\in\Omega:\left\vert
\mathbf{S}_{\{0,x\}}\left(  \omega\right)  \right\vert =k\right\}
|\mathbf{A}_{k}\left(  E;x\right)  \right)  \ .\nonumber
\end{align}
Moreover, because $\left\{  \left\vert \mathbf{S}_{\{0,x\}}\right\vert
=k\right\}  $ is a decreasing event, it holds, since (\ref{sdi}) is also valid
for $\mathbb{P}_{q,p}\left(  \cdot|\mathbf{A}_{k}\left(  E;x\right)  \right)
$ (see \cite{Gr} Theorem (3.1) p.37), that%
\begin{align}
\mathbb{P}_{q,p}\left(  \left\{  \left\vert \mathbf{S}_{\{0,x\}}\right\vert
=k\right\}  |\mathbf{A}_{k}\left(  E;x\right)  \right)   &  \leq
\mathbb{P}_{\frac{p}{p+q(1-p)}}\left\{  \left\vert \mathbf{S}_{\{0,x\}}%
\right\vert =k|\mathbf{A}_{k}\left(  E;x\right)  \right\}  \\
&  \leq\left(  \frac{q\left(  1-p\right)  }{p+q(1-p)}\right)  ^{k}%
\sum_{\mathcal{G}\in\mathbb{G}_{k}}\mathbb{P}_{\frac{p}{p+q(1-p)}}\left\{
G\left[  \mathbf{S}_{\{0,x\}}\right]  =\mathcal{G}\right\}  \nonumber\\
&  \leq\left\vert \mathbb{G}_{k}\right\vert \left(  \frac{q\left(  1-p\right)
}{p+q(1-p)}\right)  ^{k}\ .\nonumber
\end{align}
We can choose for each $\mathcal{G}\in\mathfrak{G}_{k}$ a minimal spanning
tree $T_{\mathcal{G}}$ and consider the collection of graphs
\begin{equation}
\mathfrak{T}_{k}:=\left\{  T_{\mathcal{G}}:\mathcal{G}\in\mathfrak{G}%
_{k}\right\}  \ .
\end{equation}
Since given a connected tree there is a walk passing only twice through any
edge of the graph, there exists a constant $c_{3}=c_{3}\left(  d\right)  >1$
such that $\left\vert \mathfrak{G}_{k}\right\vert =c_{3}^{k}.$ Therefore,
\begin{gather}
\mathbb{P}_{q,p}\left\{  \left\vert \mathbf{S}_{\{0,x\}}\right\vert
\geq\left(  1+\delta\right)  \varphi\left(  x\right)  ,0<\left\vert
\mathbf{C}_{\{0,x\}}\right\vert <\infty\right\}  \leq\mathbb{P}_{q,p}\left\{
\left\vert \mathbf{S}_{\{0,x\}}\right\vert \geq\left(  1+\delta\right)
\varphi\left(  x\right)  \right\}  \\
\leq\sum_{k\geq\left(  1+\delta\right)  \varphi\left(  x\right)  }%
\mathbb{P}_{q,p}\left(  \mathbf{A}_{k}\left(  x\right)  \right)  \leq
\sum_{k\geq\left(  1+\delta\right)  \varphi\left(  x\right)  }c_{3}^{k}\left(
\frac{q\left(  1-p\right)  }{p+q(1-p)}\right)  ^{k}\ .\nonumber
\end{gather}
Since
\begin{equation}
\mathbb{P}_{q,p}\{0<\left\vert \mathbf{C}_{\{0,x\}}\right\vert <\infty
\}=\sum_{k\geq\varphi\left(  x\right)  }\mathbb{P}_{q,p}\left(  \mathbf{A}%
_{k}\left(  x\right)  \right)  \geq\mathbb{P}_{q,p}\left(  \mathbf{A}\left(
x\right)  \right)  \ ,\label{lbP0->x1}%
\end{equation}
for any $E\in\left\{  E^{\prime}\subset\mathbb{E}^{d}:\left\vert E^{\prime
}\right\vert =\psi\left(  x\right)  ,E^{\prime}=\mathbf{E}\left(  \omega\right)
,\omega\in\mathbf{A}\left(  x\right)  \right\}  ,$ denoting by $\mathbf{A}%
\left(  E;x\right)  :=\left\{  \omega\in\mathbf{A}\left(  x\right)  :\mathbf{E}\left(
\omega\right)  =E\right\}  ,$ by (\ref{sdi}) and (\ref{fi/psi}),
\begin{align}
\mathbb{P}_{q,p}\left(  \mathbf{A}\left(  x\right)  \right)   &
=\mathbb{P}_{q,p}\left(  \mathbf{A}\left(  x\right)  |\mathbf{A}\left(
E;x\right)  \right)  \mathbb{P}_{q,p}\left(  \mathbf{A}\left(  E;x\right)
\right)  \label{lbP0->x2}\\
&  \geq\mathbb{P}_{q,p}\left(  \left\{  \left\vert \mathbf{S}_{\{0,x\}}%
\right\vert =\varphi\left(  x\right)  \right\}  |\mathbf{A}\left(  E;x\right)
\right)  \mathbb{P}_{\frac{p}{p+q\left(  1-p\right)  }}\left(  \mathbf{A}%
\left(  E;x\right)  \right)  \nonumber\\
&  \geq\mathbb{P}_{p}\left(  \left\{  \left\vert \mathbf{S}_{\{0,x\}}%
\right\vert =\varphi\left(  x\right)  \right\}  |\mathbf{A}\left(  E;x\right)
\right)  \left(  \frac{p}{p+q\left(  1-p\right)  }\right)  ^{\psi\left(
x\right)  }\nonumber\\
&  \geq\left(  \frac{p}{p+q\left(  1-p\right)  }\right)  ^{\psi\left(
x\right)  }\left(  1-p\right)  ^{\varphi\left(  x\right)  }\nonumber\\
&  \geq\left\{  \left[  \left(  \frac{p}{p+q\left(  1-p\right)  }\right)
\right]  ^{c_{2}}\left(  1-p\right)  \right\}  ^{\varphi\left(  x\right)
}\ .\nonumber
\end{align}
Therefore, $\forall p\in\left(  \frac{q\left(  1-\frac{1}{c_{3}}\right)
}{\frac{1}{c_{3}}+q\left(  1-\frac{1}{c_{3}}\right)  },1\right)  ,$ choosing
$\delta^{\ast}$ as in (\ref{d*}), $\forall\delta>\delta^{\ast},$ we have
\begin{gather}
\mathbb{P}_{q,p}\left(  \{\left\vert \mathbf{S}_{\{0,x\}}\right\vert
\geq\left(  1+\delta\right)  \varphi\left(  x\right)  \}|\left\{  0<\left\vert
\mathbf{C}_{\{0,x\}}\right\vert <\infty\right\}  \right)  \\
\leq\frac{1}{1-c_{3}\left(  \frac{q\left(  1-p\right)  }{p+q(1-p)}\right)
}\frac{\left[  c_{3}\left(  \frac{q\left(  1-p\right)  }{p+q(1-p)}\right)
\right]  ^{\left(  1+\delta\right)  \varphi\left(  x\right)  }}{\left[
\left(  \frac{p}{p+q\left(  1-p\right)  }\right)  ^{c_{2}}\left(  1-p\right)
\right]  ^{\varphi\left(  x\right)  }}\nonumber\\
=\frac{1}{1-c_{3}\left(  \frac{q\left(  1-p\right)  }{p+q(1-p)}\right)
}\left[  \frac{c_{3}^{1+\delta}\left(  \frac{q}{p+q\left(  1-p\right)
}\right)  ^{1+\delta}\left(  1-p\right)  ^{\delta}}{\left(  \frac
{p}{p+q\left(  1-p\right)  }\right)  ^{c_{2}}}\right]  ^{\varphi\left(
x\right)  }\ .\nonumber
\end{gather}

\end{proof}

\begin{proposition}
Given $q\geq1$ and $p\in\left(  0,1\right)  $ let $\mathbb{P}_{q,p}$ be a
translation invariant Random Cluster measure on $\mathbb{L}^{d}$ with
parameters $q$ and $p.$ Then, for any $x\in\mathbb{R}^{d},$%
\begin{equation}
\tau_{q,p}\left(  x\right)  :=\lim_{n\rightarrow\infty}\frac{1}{n}%
\log\mathbb{P}_{q,p}\left\{  0\longleftrightarrow\left\lfloor nx\right\rfloor
\ ,\ \left\vert \mathbf{C}_{\{0,\left\lfloor nx\right\rfloor \}}\right\vert
<\infty\right\}
\end{equation}
exists and is a convex and homogeneous-of-order-one function on $\mathbb{R}%
^{d}.$
\end{proposition}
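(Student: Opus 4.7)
Let $A(z):=\{0\leftrightarrow z,\,|\mathbf{C}_{\{0,z\}}|<\infty\}$ and $f(z):=\mathbb{P}_{q,p}(A(z))$. The plan is to apply the classical subadditive-limit argument to $\log f$, the complication being that $A(z)$ is the intersection of an increasing event (connectedness) and a decreasing event (finiteness of the cluster of $0$), so the FKG inequality cannot be used on $A(z)$ itself.

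First I would establish two-sided exponential control $c_1^{\|z\|}\le f(z)\le e^{-c_2\|z\|}$. The lower bound comes from opening every edge of a shortest lattice path from $0$ to $z$ and closing every plaquette of its external boundary, an explicit finite-energy construction. The upper bound is the exponential decay of finite connections in the supercritical regime, which for $q\ge 1$ follows from the van den Berg--H\"aggstr\"om--Kahn inequality of \cite{BHK} (as the authors note in the acknowledgements, this is precisely the role of the present proposition). Together, these bounds ensure that any subsequential limit of $n^{-1}\log f(\lfloor nx\rfloor)$ is finite and strictly negative.

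The core step is the approximate superadditivity
\begin{equation*}
f(z_1+z_2) \;\ge\; c(p,q,d)\,f(z_1)\,f(z_2) \qquad\text{for all }\|z_1\|,\|z_2\|\text{ large enough.}
\end{equation*}
Geometrically, on $A(z_1)\cap(z_1+A(z_2))$ the two finite clusters both contain $z_1$, hence coincide into one finite cluster witnessing $A(z_1+z_2)$. To turn this picture into a quantitative lower bound despite the non-monotonicity of $A$, I would use Proposition~\ref{P1}: up to a factor $1-o(1)$ each event may be restricted to configurations whose interface $\mathbf{S}$ has size at most $(1+\delta^{\ast})\varphi(z_i)$, which confines each cluster to a deterministic slab of width $O(\|z_i\|)$ around the segment $[0,z_i]$. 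Choosing the two slabs to meet only inside a ball $B$ of fixed radius around $z_1$, I would prescribe $O(1)$ open edges in $B$ merging the two clusters (finite-energy cost $\ge p^{|B|}$) together with a closed ``collar'' of plaquettes around the union separating it from infinity; the two increasing building blocks (``path inside slab $i$'') are then combined via the FKG inequality for $\mathbb{P}_{q,p}$, while the decreasing finiteness requirement is absorbed once and for all into the explicit collar.

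Given this superadditivity, Fekete's lemma gives existence of $\tau_{q,p}(x)=\lim_n n^{-1}\log f(\lfloor nx\rfloor)$ for each $x\in\mathbb{Z}^d$, the limit coinciding with the supremum of the sequence. Homogeneity $\tau_{q,p}(\lambda x)=\lambda\tau_{q,p}(x)$ for $\lambda\in\mathbb{Q}_+$ follows by reindexing $n\mapsto\lfloor\lambda n\rfloor$, and the uniform Lipschitz bound coming from Step~1 extends $\tau_{q,p}$ continuously to all of $\mathbb{R}^d$. Superadditivity together with positive 1-homogeneity then yields $\tau_{q,p}(\lambda x+(1-\lambda)y)\ge\lambda\tau_{q,p}(x)+(1-\lambda)\tau_{q,p}(y)$, which, read consistently with the sign of Theorem~\ref{thm1}, is the claimed convexity of the 1-homogeneous functional $-\tau_{q,p}$ entering $e^{-\tau_{q,p}(x)}$. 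The main obstacle lies in the surgery step above: non-monotonicity of the finite-connection event forces one to invoke Proposition~\ref{P1} in order to confine the cluster to a deterministic slab before FKG and finite energy can be brought to bear on its monotone components.
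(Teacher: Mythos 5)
Your approach to the supermultiplicativity step is genuinely different from the paper's and, as described, has a significant gap. The paper's entire proof hinges on Theorem~2.1 of \cite{BHK}: one fixes a finite box $\Lambda\ni 0$ and observes that the indicators $\mathbf{1}_{\{0\leftrightarrow x,\,0\nleftrightarrow\Lambda^{c}\}}$ and $\mathbf{1}_{\{x\leftrightarrow y,\,y\nleftrightarrow\Lambda^{c}\}}$ are simultaneously nondecreasing in the edge set $\mathbf{E}_{\{x\}}$ of the open cluster of $x$ and nonincreasing in $\mathbf{E}_{\Lambda^{c}}$. The BHK conditional correlation inequality, applied under the conditioning $\{x\nleftrightarrow\Lambda^{c}\}$, then delivers
\begin{equation*}
\mathbb{P}_{q,p}\left\{0\leftrightarrow y,\;\mathbf{C}_{\{0,y\}}\cap\Lambda^{c}=\varnothing\right\}
\;\geq\;
\mathbb{P}_{q,p}\left\{0\leftrightarrow x,\;\mathbf{C}_{\{0,x\}}\cap\Lambda^{c}=\varnothing\right\}
\,\mathbb{P}_{q,p}\left\{x\leftrightarrow y,\;\mathbf{C}_{\{x,y\}}\cap\Lambda^{c}=\varnothing\right\}
\end{equation*}
with constant exactly $1$, and the proposition follows from $\Lambda\uparrow\mathbb{Z}^{d}$ plus the standard subadditivity package (Proposition~15 of \cite{CG}). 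No geometric surgery, no slab confinement, no finite-energy collar.

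Your route has two concrete problems. First, you invoke Proposition~\ref{P1} to confine the cluster to a slab, but Proposition~\ref{P1} only holds for $p>p^{\ast}(q,d)$, whereas the statement you are proving asserts existence and convexity of $\tau_{q,p}$ for \emph{all} $p\in(0,1)$; the BHK route needs no lower bound on $p$, and this extra generality is precisely why the authors credit \cite{BHK} for this proposition. Second, the proposed FKG-plus-collar surgery does not decompose the non-monotone event cleanly: the event ``the cluster stays inside the slab'' is itself decreasing (opening edges outside the slab can merge it with an infinite cluster), so the ``increasing building blocks'' you try to combine via FKG are not in fact increasing without further conditioning, and conditioning on the external configuration being closed destroys the simple product structure. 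This is exactly the non-monotonicity obstruction the paper flags in its introduction as the reason Bernoulli techniques do not transfer naively, and the BHK inequality is designed to handle it at one stroke. You do correctly identify that the statement's stated sign convention is opposite to that used in Theorem~\ref{thm1}; the supermultiplicativity of the finite-connection function makes $-\tau_{q,p}$, as defined in the proposition, the convex norm appearing there.
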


\begin{proof}
For any $\Delta\subseteq\mathbb{Z}^{d},$ let us denote by $\mathbf{E}_{\Delta
}:=\bigcup_{x\in\Delta}E\left(  \mathbf{C}_{\left\{  x\right\}  }\right)
\subseteq\mathbb{E}^{d}$ the set of edges belonging to open paths starting at
the vertices of $\Delta.$

Let now $\Lambda$ be a finite subset of $\mathbb{Z}^{d}$ such that $\Lambda
\ni0.$ For any two distinct lattice points $x,y\in\Lambda,$ looking at
$\mathbf{1}_{\left\{  0\longleftrightarrow x\ ,\ 0\nleftrightarrow\Lambda
^{c}\right\}  },\mathbf{1}_{\left\{  x\longleftrightarrow
y\ ,\ y\nleftrightarrow\Lambda^{c}\right\}  }$ as functions of $\left(
\mathbf{E}_{\left\{  x\right\}  },\mathbf{E}_{\Lambda^{c}}\right)  ,$ they are
both nondecreasing on $\mathbf{E}_{\left\{  x\right\}  }$ and nonincreasing on
$\mathbf{E}_{\Lambda^{c}}.$ Therefore, by Theorem 2.1 in \cite{BHK},
\begin{gather}
\mathbb{P}_{q,p}\left(  \left\{  0\longleftrightarrow x\ ,\ 0\nleftrightarrow
\Lambda^{c}\right\}  \cap\left\{  x\longleftrightarrow y\ ,\ y\nleftrightarrow
\Lambda^{c}\right\}  |\left\{  x\nleftrightarrow\Lambda^{c}\right\}  \right)
\geq\\
\mathbb{P}_{q,p}\left(  \left\{  0\longleftrightarrow x\ ,\ 0\nleftrightarrow
\Lambda^{c}\right\}  |\left\{  x\nleftrightarrow\Lambda^{c}\right\}  \right)
\mathbb{P}_{q,p}\left(  \left\{  x\longleftrightarrow y\ ,\ y\nleftrightarrow
\Lambda^{c}\right\}  |\left\{  x\nleftrightarrow\Lambda^{c}\right\}  \right)
\ ,\nonumber
\end{gather}

that is
\begin{align}
\mathbb{P}_{q,p}\left\{  x\nleftrightarrow\Lambda^{c}\right\}  \mathbb{P}%
_{q,p}\left\{  0\longleftrightarrow x\ ,\ x\longleftrightarrow
y\ ,\ x\nleftrightarrow\Lambda^{c}\right\}   &  \geq\mathbb{P}_{q,p}\left\{
0\longleftrightarrow x\ ,\ x\nleftrightarrow\Lambda^{c}\right\}  \times\\
&  \times\mathbb{P}_{q,p}\left\{  x\longleftrightarrow y\ ,\ x\nleftrightarrow
\Lambda^{c}\right\}  \ ,\nonumber
\end{align}
which implies
\begin{align}
\mathbb{P}_{q,p}\left\{  0\longleftrightarrow x\ ,\ x\longleftrightarrow
y\ ,\ \mathbf{C}_{\left\{  0,x,y\right\}  }\cap\Lambda^{c}=\varnothing
\right\}   &  \geq\mathbb{P}_{q,p}\left\{  0\longleftrightarrow
x\ ,\ \mathbf{C}_{\left\{  0,x\right\}  }\cap\Lambda^{c}=\varnothing\right\}
\times\\
&  \times\mathbb{P}_{q,p}\left\{  x\longleftrightarrow y\ ,\ \mathbf{C}%
_{\left\{  x,y\right\}  }\cap\Lambda^{c}=\varnothing\right\}  \ .\nonumber
\end{align}
But
\begin{equation}
\mathbb{P}_{q,p}\left\{  0\longleftrightarrow y\ ,\ \mathbf{C}_{\left\{
0,y\right\}  }\cap\Lambda^{c}=\varnothing\right\}  \geq\mathbb{P}%
_{q,p}\left\{  0\longleftrightarrow x\ ,\ x\longleftrightarrow
y\ ,\ \mathbf{C}_{\left\{  0,x,y\right\}  }\cap\Lambda^{c}=\varnothing
\right\}  \ ,
\end{equation}
hence
\begin{align}
\mathbb{P}_{q,p}\left\{  0\longleftrightarrow y\ ,\ \mathbf{C}_{\left\{
0,y\right\}  }\cap\Lambda^{c}=\varnothing\right\}   &  \geq\mathbb{P}%
_{q,p}\left\{  0\longleftrightarrow x\ ,\ \mathbf{C}_{\left\{  0,x\right\}
}\cap\Lambda^{c}=\varnothing\right\}  \times\\
&  \times\mathbb{P}_{q,p}\left\{  x\longleftrightarrow y\ ,\ \mathbf{C}%
_{\left\{  x,y\right\}  }\cap\Lambda^{c}=\varnothing\right\}  \ .\nonumber
\end{align}
Taking the limit $\Lambda\uparrow\mathbb{Z}^{d}$ we have
\begin{align}
\mathbb{P}_{q,p}\left(  \left\{  0\longleftrightarrow y\ ,\ \left\vert
\mathbf{C}_{\{0,y\}}\right\vert <\infty\right\}  \right)   &  \geq
\mathbb{P}_{q,p}\left(  \left\{  0\longleftrightarrow x\ ,\ \left\vert
\mathbf{C}_{\{0,x\}}\right\vert <\infty\right\}  \right)  \times\\
&  \times\mathbb{P}_{q,p}\left(  \left\{  x\longleftrightarrow
y\ ,\ \left\vert \mathbf{C}_{\{x,y\}}\right\vert <\infty\right\}  \right)
\ .\nonumber
\end{align}
Proceeding as in the proof of Proposition 15 in \cite{CG} we obtain the thesis.
\end{proof}

\subsection{Effective structure of connectivities}

\subsubsection{Definitions}

Let $t\in\mathbb{R}^{d}.$ Given two points $x,y\in\mathbb{Z}^{d}$ such that
$\left\langle x,t\right\rangle \leq\left\langle y,t\right\rangle ,$ we denote
by $\mathbf{C}_{\{x,y\}}^{t}$ the cluster of $x$ and $y$ inside the strip
$\mathcal{S}_{x,y}^{t},$ if they are connected in the restriction of the
configuration to $\mathcal{S}_{x,y}^{t}.$

Let $u$ be the first of the unit vectors in the direction of the coordinate
axes $u_{1},..,u_{d}$ such that $\left\langle t,u\right\rangle $ is maximal.

\begin{definition}
Given $t\in\mathbb{R}^{d},$ let $x,y\in\mathbb{Z}^{d}$ such that $\left\langle
x,t\right\rangle \leq\left\langle y,t\right\rangle $ be connected inside
$\mathcal{S}_{x,y}^{t}.$ The points $b\in\mathbf{C}_{\{x,y\}}^{t}$ such that:

\begin{enumerate}
\item $\left\langle t,x+u\right\rangle \leq\left\langle t,b\right\rangle
\leq\left\langle t,y-u\right\rangle ;$

\item $\mathbf{C}_{\{x,y\}}^{t}\cap\mathcal{S}_{b-u,b+u}^{t}=\left\{
b-u,b,b+u\right\}  ;$
\end{enumerate}

are said to be $t$\emph{-break points} of $\mathbf{C}_{\{x,y\}}.$ The
collection of such points, which we remark is a totally ordered set with
respect to the scalar product with $t,$ will be denoted by $\mathbf{B}%
^{t}\left(  x,y\right)  .$
\end{definition}

\begin{definition}
Given $t\in\mathbb{R}^{d},$ let $x,y\in\mathbb{Z}^{d}$ such that $\left\langle
x,t\right\rangle \leq\left\langle y,t\right\rangle $ be connected inside
$\mathcal{S}_{x,y}^{t}.$ An edge $\{b,b+u\}$ such that $b,b+u\in\mathbf{B}%
^{t}\left(  x,y\right)  $ is called a $t$\emph{-bond} of $\mathbf{C}%
_{\{x,y\}}.$ The collection of such edges will be denoted by $\mathbf{E}%
^{t}\left(  x,y\right)  ,$ while $\mathbf{B}_{e}^{t}\left(  x,y\right)
\subset\mathbf{B}^{t}\left(  x,y\right)  $ will denote the subcollection of
$t$-break points\emph{\ }$b$ of $\mathbf{C}_{\{x,y\}}$ such that the edge
$\{b,b+u\}\in\mathbf{E}^{t}\left(  x,y\right)  .$
\end{definition}

For any $t\in\mathbb{R}^{d}$ and $\varepsilon\in\left(  0,1\right)  ,$ let
\begin{equation}
\mathcal{C}_{\varepsilon}\left(  t\right)  :=\left\{  x\in\mathbb{R}%
^{d}:\left(  \hat{t},x\right)  \geq\left(  1-\varepsilon\right)  \left\Vert
x\right\Vert \right\}  \ .
\end{equation}

\begin{definition}
\label{d.cpts}Given $t\in\mathbb{R}^{d},$ let $x,y\in\mathbb{Z}^{d}$ such that
$\left\langle x,t\right\rangle \leq\left\langle y,t\right\rangle $ be
connected inside $\mathcal{S}_{x,y}^{t}.$ Then, for any $\varepsilon\in\left(
0,1\right)  :$

\begin{enumerate}
\item $x$ is said to be a $\left(  t,\varepsilon\right)  $\emph{-forward cone
point} if $\mathbf{C}_{\left\{  x,x+u\right\}  }^{t}=\left\{  x,x+u\right\}  $
and $\mathbf{C}_{\{x,y\}}\cap\mathcal{H}_{x}^{t,+}\subset x+\mathcal{C}%
_{\varepsilon}\left(  t\right)  ;$

\item $y$ is said to be a $\left(  t,\varepsilon\right)  $\emph{-backward cone
point} if $\mathbf{C}_{\left\{  y-u,y\right\}  }^{t}=\left\{  y-u,y\right\}  $
and $\mathbf{C}_{\{x,y\}}\cap\mathcal{H}_{x}^{t,-}\subset y-\mathcal{C}%
_{\varepsilon}\left(  t\right)  ;$

\item $z$ is said to be a $\left(  t,\varepsilon\right)  $\emph{-cone point}
if $z\in\mathbf{B}^{t}\left(  x,y\right)  $ and $\mathbf{C}_{\{z,y\}}\subset
z+\mathcal{C}_{\varepsilon}\left(  t\right)  ,\mathbf{C}_{\{x,z\}}\subset
z-\mathcal{C}_{\varepsilon}\left(  t\right)  .$ The collection of $\left(
t,\varepsilon\right)  $\emph{-cone points} is denoted by $\mathbf{K}%
_{\varepsilon}^{t}\left(  x,y\right)  .$
\end{enumerate}
\end{definition}

\begin{definition}
\label{d.irr}Given $t\in\mathbb{R}^{d},$ let $x,y\in\mathbb{Z}^{d}$ such that
$\left\langle x,t\right\rangle \leq\left\langle y,t\right\rangle $ be
connected inside $\mathcal{S}_{x,y}^{t}.$ Then, for any $\varepsilon\in\left(
0,1\right)  :$

\begin{enumerate}
\item $\mathbf{C}_{\{x,y\}}$ is said to be $\left(  t,\varepsilon\right)
$\emph{-forward irreducible} if $x$ is a $\left(  t,\varepsilon\right)
$-forward cone point and $\mathbf{K}_{\varepsilon}^{t}\left(  x+u,y\right)
=\varnothing;$

\item $\mathbf{C}_{\{x,y\}}$ is said to be $\left(  t,\varepsilon\right)
$\emph{-backward irreducible} if $y$ is a $\left(  t,\varepsilon\right)
$-backward cone point and $\mathbf{K}_{\varepsilon}^{t}\left(  x,y-u\right)
=\varnothing;$

\item $\mathbf{C}_{\{x,y\}}$ is said to be $\left(  t,\varepsilon\right)
$\emph{-irreducible} if $x,y\in\mathbf{K}_{\varepsilon}^{t}\left(  x,y\right)
$ and $\mathbf{K}_{\varepsilon}^{t}\left(  x+u,y-u\right)  =\varnothing.$
\end{enumerate}
\end{definition}

Notice that by definition, if $x$ is a $\left(  t,\varepsilon\right)
$-forward cone point, then is also a $\left(  t,\varepsilon^{\prime}\right)
$-forward cone point for any $\varepsilon^{\prime}\in\left(  \varepsilon
,1\right)  .$The same remark also applies to $\left(  t,\varepsilon\right)
$-backward cone points and therefore to $\left(  t,\varepsilon\right)  $-cone
points implying $\mathbf{K}_{\varepsilon}^{t}\left(  x,y\right)
\subseteq\mathbf{K}_{\varepsilon^{\prime}}^{t}\left(  x,y\right)  .$ Hence, if
for $t\in\mathbb{R}^{d}$ and $x,y\in\mathbb{Z}^{d}$ as in Definition
\ref{d.irr}, there exists $\varepsilon\in\left(  0,1\right)  $ such that
$\mathbf{C}_{\left\{  x,y\right\}  }$\ satisfies either condition 1 or 2 or 3
of that definition, then $\mathbf{C}_{\left\{  x,y\right\}  }$ is said to be
respectively $t$\emph{-forward irreducible, }$t$\emph{-backward irreducible,
}$t$\emph{-irreducible} and we denote by $\mathbf{K}^{t}\left(  x,y\right)
:=\bigcup_{\varepsilon\in\left(  0,1\right)  }\mathbf{K}_{\varepsilon}%
^{t}\left(  x,y\right)  $ the collection of $t$\emph{-cone points} of
$\mathbf{C}_{\left\{  x,y\right\}  }$ as well as\linebreak$\mathcal{E}%
^{t}\left(  x,y\right)  :=\left\{  e\in\mathbf{E}^{t}\left(  x,y\right)
:e\subset\mathbf{K}^{t}\left(  x,y\right)  \right\}  .$

\begin{definition}
\label{comp}Given $t\in\mathbb{R}^{d},$ let $x,y\in\mathbb{Z}^{d}$ such that
$\left\langle x,t\right\rangle \leq\left\langle y,t\right\rangle $ be
connected. Two subclusters $\gamma_{1}$ and $\gamma_{2}$ of $\mathbf{C}%
_{\{x,y\}}$ are said to be \emph{compatible}, which condition we denote by
$\gamma_{1}\coprod\gamma_{2},$ if they are connected and there exists
$b\in\mathbf{K}^{t}\left(  x,y\right)  $ such that $\gamma_{1}$ is a
subcluster of $\mathbf{C}_{\{x,b\}}\cap\mathcal{H}_{b}^{t,-}$ containing $b$
and $\gamma_{2}$ is a subcluster of $\mathbf{C}_{\{b+u,y\}}\cap\mathcal{H}%
_{b+u}^{t,+}$ containing $b+u.$

Therefore, two subsets $\mathbf{s}_{1},\mathbf{s}_{2}$ of $\mathbf{S}%
_{\{x,y\}}$ will be called \emph{compatible}, and we will still denote this
condition by $\mathbf{s}_{1}\coprod\mathbf{s}_{2},$ if there exist two
compatible subclusters $\gamma_{1},\gamma_{2}$ of $\mathbf{C}_{\{x,y\}}$ such
that $\mathbf{s}_{i}=\left(  \overline{\partial}\gamma_{i}\right)  ^{\ast}%
\cap\mathbf{S}_{\{x,y\}},i=1,2.$
\end{definition}

\subsubsection{Renormalization}

In Lemma 4 and Proposition 5 in \cite{CG} we proved that $\varphi$ is
subadditive and the sequence $\left\{  \bar{\varphi}_{n}\right\}
_{n\in\mathbb{N}},$ such that $\forall n\in\mathbb{N},\mathbb{R}^{d}\ni
x\longmapsto\bar{\varphi}_{n}\left(  x\right)  :=\frac{\varphi\left(
\left\lfloor nx\right\rfloor \right)  }{n}\in\mathbb{R}^{+},$ converges
pointwise on $\mathbb{R}^{d},$ and uniformly on $\mathbb{S}^{d-1},$ to a
convex, homogeneous-of-order-one function $\bar{\varphi}.$ As in \cite{CG} we
also define
\begin{equation}
\mathcal{W}:=\bigcap_{\hat{x}\in\mathbb{S}^{d-1}}\left\{  w\in\mathbb{R}%
^{d}:\left\langle w,\hat{x}\right\rangle \leq\bar{\varphi}\left(  \hat
{x}\right)  \right\}  \ .
\end{equation}
Given $x\in\mathbb{Z}^{d},$ let $t\in\mathfrak{d}\mathcal{W}\left(  x\right)
:=\left\{  w\in\mathfrak{d}\mathcal{W}:\left\langle w,x\right\rangle
=\bar{\varphi}\left(  x\right)  \right\}  .$

For $N\in\mathbb{N}$ larger than $1,$ let us set $\mathfrak{t}_{N}%
=\mathfrak{t}_{N}\left(  x\right)  :=\left\lfloor \frac{\left\Vert
x\right\Vert }{N}\right\rfloor -1$ and
\begin{align}
y_{i}  &  :=\left\lfloor iN\hat{x}\right\rfloor \ ;\ \mathcal{H}_{i}%
^{t}:=\mathcal{H}_{y_{i}}^{t}\ ;\mathcal{H}_{i}^{t,-}:=\mathcal{H}_{y_{i}%
}^{t,-}\ ;\ \mathcal{H}_{i}^{t,+}:=\mathcal{H}_{y_{i}}^{t,+}%
\ ,\ i=0,..,\mathfrak{t}_{N}\ ;\\
y_{\mathfrak{t}_{N}+1}  &  :=x\ ;\ \mathcal{H}_{y_{\mathfrak{t}_{N}+1}}%
^{t}:=\mathcal{H}_{x}^{t}\ ;\ \mathcal{H}_{y_{\mathfrak{t}_{N}+1}}%
^{t,-}:=\mathcal{H}_{x}^{t,-}\ ;\\
\mathcal{S}_{i}^{t}  &  :=\mathcal{H}_{i}^{t,+}\cap\mathcal{H}_{i+1}^{t,-}\ .
\end{align}
With a slight notational abuse we still denote by $\mathbf{S}_{\{0,x\}}$ its
representation as a hypersurface in $\mathbb{R}^{d}$ and define
\begin{equation}
\mathbf{C}_{i}^{t}:=\mathbf{C}_{\{0,x\}}\cap\mathcal{S}_{i}^{t}\ ;\ \mathbf{S}%
_{i}^{t}:=\mathbf{S}_{\{0,x\}}\cap\mathcal{S}_{i}^{t}\ .
\end{equation}
Hence, $\mathbf{C}_{\{0,x\}}=%
{\textstyle\bigcup_{i=0}^{\mathfrak{t}_{N}}}
\mathbf{C}_{i}^{t}$ and $\mathbf{S}_{\{0,x\}}\cap\mathcal{S}_{0,x}%
^{t}\subseteq%
{\textstyle\bigcup_{i=0}^{\mathfrak{t}_{N}}}
\mathbf{S}_{i}^{t}\ .$

We call \emph{crossing} any connected component $\mathbf{s}$ of $\mathbf{S}%
_{i}^{t}$ such that, denoting by $\mathcal{K}\left(  \mathbf{s}\right)  $ the
compact subset of $\mathcal{S}_{i}^{t}$ whose boundary is $\mathbf{s},$ there
exist $y\in\mathcal{H}_{i}^{t,-}\cap\mathbb{Z}^{d}$ and $y^{\prime}%
\in\mathcal{H}_{i+1}^{t,+}\cap\mathbb{Z}^{d},$ both belonging to
$\mathbf{C}_{\left\{  0,x\right\}  },$ which are connected by an open path in
$\mathbb{L}^{d}\cap\mathcal{K}\left(  \mathbf{s}\right)  .$

We remark that since $\mathbf{C}_{\{0,x\}}$ is connected, the existence of two
crossings in $\mathcal{S}_{i}^{t}$ implies the existence of two disjoint paths
connecting $\mathcal{H}_{i}^{t}$ and $\mathcal{H}_{i+1}^{t}$ while the
converse does not hold true in general.

We say that a slab $\mathcal{S}_{i}^{t}$ is \emph{good} if $\mathbf{S}_{i}%
^{t}$ is connected and made by just a single crossing of size smaller than
twice the minimal one, otherwise we call it \emph{bad}.

In \cite{CG} we proved that, for $q=1,$ for $\left\vert \mathbf{S}_{\left\{
0,x\right\}  }\right\vert \leq\left(  1+\delta\right)  \varphi\left(
x\right)  ,$ with $\delta>\delta^{\ast}$ given in (\ref{d*}), the number of
bad slabs is at most proportional to $\delta\frac{\left\Vert x\right\Vert }%
{N}.$ This is a purely deterministic statement. To make the paper
self-contained we rederive it here.

Given $t\in\mathbb{R}^{d},$ for any $x,y\in\mathbb{Z}^{d}$ such that
$\left\langle t,x\right\rangle \leq\left\langle t,y\right\rangle ,$ in
\cite{CG}, we introduced the function
\begin{equation}
\phi_{t}\left(  x,y\right)  :=\min_{\omega\in\left\{  \mathcal{H}_{x}%
^{t}\longleftrightarrow\mathcal{H}_{y}^{t}\right\}  }\left\vert \left\{
e^{\ast}\in\mathbf{S}_{\left\{  x,y\right\}  }\left(  \omega\right)  :e^{\ast
}\subset\mathcal{S}_{\left\{  x,y\right\}  }^{t}\right\}  \right\vert \ ,
\label{ffi}%
\end{equation}
where
\begin{equation}
\left\{  \mathcal{H}_{x}^{t}\longleftrightarrow\mathcal{H}_{y}^{t}\right\}
:=\bigcup_{\left(  x^{\prime},y^{\prime}\right)  \in\mathcal{H}_{x}^{t,-}%
\cap\mathbb{Z}^{d}\times\mathcal{H}_{y}^{t,+}\cap\mathbb{Z}^{d}}\left\{
\left\vert \mathbf{C}_{\left\{  x^{\prime},y^{\prime}\right\}  }\right\vert
>0\right\}  \ ,
\end{equation}
which, by translation invariance, we can write $\phi_{t}\left(  x,y\right)
=\phi_{t}\left(  y-x\right)  ,$ and proved (see \cite{CG} Lemma 17) that, for
any $x\in\mathbb{R}^{d}$ and $t\in\mathfrak{d}\mathcal{W},\bar{\phi}%
_{t}\left(  x\right)  :=\lim_{n\rightarrow\infty}\frac{\phi_{t}\left(
\left\lfloor nx\right\rfloor \right)  }{n}=\bar{\varphi}\left(  x\right)  .$
Let $\eta$ be the fraction of slabs containing a portion of $\mathbf{S}%
_{\left\{  0,x\right\}  }$ whose size is larger than or equal to twice the
minimal size of a single crossing. Since any crossing is composed by at least
$\phi_{t}\left(  \left\lfloor N\hat{x}\right\rfloor \right)  $ plaquettes, we
have
\begin{equation}
\frac{\left\Vert x\right\Vert }{N}\left(  \eta2\phi_{t}\left(  \left\lfloor
N\hat{x}\right\rfloor \right)  +\left(  1-\eta\right)  \phi_{t}\left(
\left\lfloor N\hat{x}\right\rfloor \right)  \right)  =\frac{\left\Vert
x\right\Vert }{N}\left(  1+\eta\right)  \phi_{t}\left(  \left\lfloor N\hat
{x}\right\rfloor \right)  <\left(  1+\delta\right)  \varphi\left(  x\right)
\ .
\end{equation}
Moreover, given $\epsilon>0,$ there exists $R_{\epsilon}>0$ such that, for any
$x\in\mathbb{Z}^{d}\cap\left(  R_{\epsilon}B\right)  ^{c},$\linebreak%
$\varphi\left(  x\right)  \leq\bar{\varphi}\left(  x\right)  \left(
1+\epsilon\right)  .$ Hence, choosing $N$ sufficiently large such that
$\phi_{t}\left(  \left\lfloor N\hat{x}\right\rfloor \right)  \leq\bar{\phi
}_{t}\left(  N\hat{x}\right)  \left(  1+\epsilon\right)  ,$ since
$t\in\mathfrak{d}\mathcal{W}\left(  x\right)  ,\ \bar{\phi}_{t}\left(
N\hat{x}\right)  =\bar{\varphi}\left(  N\hat{x}\right)  $ and, by the previous
inequality, we get $\eta<\delta.$ Furthermore, since the number of plaquettes
of $\mathbf{S}_{\left\{  0,x\right\}  }$ exceeding $\varphi\left(  x\right)  $
is at most $\delta\varphi\left(  x\right)  ,$ if $\mathbf{S}_{\left\{
0,x\right\}  }$ does not give rise to multiple crossings and $\mathcal{S}%
_{i}^{t}$ is a bad slab, the components of $\mathbf{S}_{i}^{t}$ which are not
crossings must be connected either to $\mathbf{S}_{i-1}^{t}$ or to
$\mathbf{S}_{i+1}^{t}.$ Therefore, the number of bad slabs with a single
crossing is at most the same as the number of such triples of consecutive
slabs, the last being smaller than $\delta\frac{\left\Vert x\right\Vert }{N}.$
Hence, the total number of bad slabs can be at most equal to $2\delta
\frac{\left\Vert x\right\Vert }{N}.$

Let $t\in\mathfrak{d}\mathcal{W}\left(  x\right)  .$ Denoting by $\left\{
v_{i}\right\}  _{i=1}^{d}$ an orthonormal basis of $\mathbb{R}^{d}$ where
$v_{1}=\hat{t}$ and $\left\{  v_{i}\right\}  _{i=2}^{d}$ is any orthonormal
basis of $\mathcal{H}^{t},$ we define, for $i\in0,..,\mathfrak{t}_{N}$ and
$n:=\left(  n_{2},..,n_{d}\right)  \in\mathbb{Z}^{d-1},$%
\begin{equation}
Q_{N}\left(  i,n\right)  :=\mathcal{S}_{i}^{t}\cap%
{\displaystyle\bigcap\limits_{j=2,..,d}}
\left\{  z\in\mathbb{R}^{d}:\left\langle v_{j},z\right\rangle \in\left[
n_{j}N,\left(  n_{j}+1\right)  N\right]  \right\}  \ ,
\end{equation}
which we call $N$\emph{-blocks}.

For any $i=0,..,\mathfrak{t}_{N},$ we define the $N$\emph{-sets}
$\mathbf{D}_{N}^{t}\left(  i\right)  $ to be convex hull in $\mathbb{R}^{d}$
of the $N$-blocks $\left\{  Q_{N}\left(  i,n\right)  \right\}  _{n\in
\mathbb{Z}^{d-1}}$ intersecting $\mathbf{S}_{i}^{t}.$ Denoting by $\left\{
\mathcal{S}_{i_{l}}^{t}\right\}  _{l=1}^{\mathfrak{g}_{N}}$ the set of good
slabs, the corresponding $N$-sets $\mathbf{D}_{N}^{t}\left(  i_{l}\right)  ,$
will be called \emph{good} while the remaining $N$-sets \emph{bad}, while the
set
\begin{equation}
\mathfrak{C}_{N}:=%
{\displaystyle\bigcup\limits_{i=0}^{\mathfrak{t}_{N}}}
\mathbf{D}_{N}^{t}\left(  i\right)
\end{equation}
will be called $N$\emph{-renormalized cluster}.

\begin{definition}
\label{corr_pts}Given $\varepsilon\in\left(  0,1\right)  $ and $l\in\left\{
1,..,\mathfrak{g}_{N}\right\}  ,$ a point of $z\in\mathbf{C}_{i_{l}}^{t}$ is
called a $\left(  t,\varepsilon\right)  $\emph{-correct point} and the
collection of these points is denoted by $\mathbb{K}_{\varepsilon}^{t}\left(
0,x\right)  ,$ if
\begin{equation}
\bigcup_{j=0}^{i_{l}-1}\mathbf{D}_{N}^{t}\left(  j\right)  \subset
z-\mathcal{C}_{\varepsilon}\left(  t\right)  \ ,\ \bigcup_{j=i_{l}%
+1}^{\mathfrak{t}_{N}}\mathbf{D}_{N}^{t}\left(  j\right)  \subset
z+\mathcal{C}_{\varepsilon}\left(  t\right)  \ .
\end{equation}

\end{definition}

Thus, setting $\mathfrak{k}_{N}:=\left\vert \mathbb{K}_{\varepsilon}%
^{t}\left(  0,x\right)  \right\vert ,\mathbb{K}_{\varepsilon}^{t}\left(
0,x\right)  =\left\{  z_{1},..,z_{\mathfrak{k}_{N}}\right\}  .$ Let $\left\{
\mathcal{S}_{i_{k}}^{t}\right\}  _{k=1}^{\mathfrak{k}_{N}}\subseteq\left\{
\mathcal{S}_{i}^{t}\right\}  _{i=0}^{\mathfrak{t}_{N}}$ such that, for any
$k=1,..,\mathfrak{k}_{N},\mathcal{S}_{i_{k}}^{t}\ni z_{k}$ and define
\begin{equation}
\mathcal{D}_{\varepsilon}^{t}\left(  k\right)  :=\left(  \left(
z_{k}+\mathcal{C}_{\varepsilon}\left(  t\right)  \right)  \cap\left(
z_{k+1}-\mathcal{C}_{\varepsilon}\left(  t\right)  \right)  \right)
\ ,k=1,..,\mathfrak{k}_{N}-1\ . \label{defD^t_e}%
\end{equation}
We select among these compact subsets of $\mathbb{R}^{d}$ those containing
$\bigcup_{j=i_{k}+1}^{i_{k+1}-1}\mathbf{D}_{N}^{t}\left(  j\right)  $ and
denote their collection by $\left\{  \mathcal{D}_{\varepsilon}^{t}\left(
k_{m}\right)  \right\}  _{m=1}^{\mathfrak{c}_{N}-1}.$

\begin{lemma}
There exist $\varepsilon\in\left(  0,1\right)  $ sufficiently large and a
positive constant $c_{5}=c_{5}\left(  \delta,\varepsilon\right)  ,$ such that
$\mathfrak{c}_{N}\geq c_{5}\frac{\left\Vert x\right\Vert }{N}.$
\end{lemma}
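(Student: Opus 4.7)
The plan is to combine the deterministic bound on the number of bad slabs, just derived, with a direct geometric analysis of the renormalized cluster, in order to produce a correct point in a positive fraction of good slabs.

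A first, essentially formal remark is that the selection step defining $\mathfrak{c}_N$ is automatic: if $z_k$ and $z_{k+1}$ are consecutive $(t,\varepsilon)$-correct points whose slab indices satisfy $i_k<i_{k+1}$, then by Definition \ref{corr_pts} each intermediate $\mathbf{D}_N^t(j)$ with $i_k<j<i_{k+1}$ lies in $z_k+\mathcal{C}_\varepsilon(t)$ and in $z_{k+1}-\mathcal{C}_\varepsilon(t)$, hence in $\mathcal{D}_\varepsilon^t(k)$ by (\ref{defD^t_e}). Since the deterministic bound just established gives $\mathfrak{g}_N\geq(1-2\delta)\,\|x\|/N-O(1)$, it is enough to exhibit a correct point in a fraction of the good slabs that is bounded below uniformly in $\|x\|$.

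To this end I would use that in each good slab $\mathcal{S}_{i_l}^t$ the crossing $\mathbf{S}_{i_l}^t$ is a single connected set of at most $2\phi_t(\lfloor N\hat x\rfloor)\sim 2N\bar{\varphi}(\hat x)$ unit plaquettes, so $\mathbf{D}_N^t(i_l)$ has transverse diameter at most $c_6 N$ for a constant $c_6=c_6(d,\hat t)$. I would take as candidate correct point a vertex $z_{i_l}\in\mathbf{C}_{i_l}^t$ whose longitudinal coordinate $\langle\hat t,z_{i_l}\rangle$ lies in the middle third of the slab, a choice possible because the crossing links the two parallel faces of $\mathcal{S}_{i_l}^t$; this guarantees that $z_{i_l}$ has longitudinal distance at least $N/3$ from every other slab. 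Writing $\tan\theta(\varepsilon):=\sqrt{2\varepsilon-\varepsilon^2}/(1-\varepsilon)\to\infty$ as $\varepsilon\uparrow 1$, the correctness of $z_{i_l}$ reduces to the family of inequalities $\|(w-z_{i_l})^\perp\|\leq\tan\theta(\varepsilon)\,|\langle\hat t,w-z_{i_l}\rangle|$ for every $j\neq i_l$ and every $w\in\mathbf{D}_N^t(j)$.

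The main obstacle is then the a priori unbounded transverse diameter $T_i$ of the bad $N$-sets. I would use the plaquette budget $|\mathbf{S}_{\{0,x\}}|\leq(1+\delta)\varphi(x)$ together with the elementary inequality $T_i\leq|\mathbf{S}_i^t|$ (plaquettes having unit side) to get $\sum_i T_i=O(\|x\|)$. Connectedness of $\mathbf{C}_{\{0,x\}}$ forces any two consecutive transverse centres of the $N$-sets to differ by at most $(T_i+T_{i+1})/2+O(N)$, so the transverse displacement from $z_{i_l}$ to any point of $\mathbf{D}_N^t(j)$ is at most $c_6 N+\sum_{i\in[i_l,j]}T_i$, whereas the longitudinal distance is at least $N(|i_l-j|-1)+N/3$. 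Choosing $\varepsilon$ close enough to $1$ that $\tan\theta(\varepsilon)$ dominates suitable multiples of $c_6$ and $1/\delta$, the only good slabs in which correctness can fail are those lying within a longitudinal buffer of length $T_j/\tan\theta(\varepsilon)$ around some bad slab $j$; summing over bad $j$'s, the total buffer has size $O(\|x\|/\tan\theta(\varepsilon))$, which for $\tan\theta(\varepsilon)$ large is an $O(\delta)$-fraction of $\|x\|/N$. The remaining good slabs, of density at least $c_5(\delta,\varepsilon)>0$, each host a correct point, yielding the required $\mathfrak{c}_N\geq c_5\,\|x\|/N$. The principal technical subtlety is this bookkeeping of buffer sizes, which must be carried out uniformly in $\|x\|$ and $N$.
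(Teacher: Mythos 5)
Your high-level strategy — use the deterministic bad-slab bound together with the surface budget $|\mathbf{S}_{\{0,x\}}|\leq(1+\delta)\varphi(x)$ to locate correct points in a positive density of good slabs — is the same as the paper's, and your opening observation that the selection of $\mathcal{D}_\varepsilon^t(k)$ is automatic once the $z_k$'s are $(t,\varepsilon)$-correct is sound. But the core estimate you rely on has a gap.

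The ``elementary inequality'' $T_i\leq|\mathbf{S}_i^t|$ is not correct as stated, and the resulting bound $\sum_iT_i=O(\|x\|)$ can fail. The transverse diameter of $\mathbf{D}_N^t(i)$ is governed by where plaquettes of $\mathbf{S}_i^t$ sit, not by how many there are; since $\mathbf{S}_i^t$ is only a slab-restriction of the connected surface $\mathbf{S}_{\{0,x\}}$, it may be disconnected, with pieces joined only through neighbouring slabs. A concrete obstruction is an excursion of $\mathbf{C}_{\{0,x\}}$ leaving the main ``tube'' at an angle close to $\pi/4$ with $\hat t$ and reaching transverse distance $A$: each affected slab $j$ contributes only $O(N)$ plaquettes to $|\mathbf{S}_j^t|$, but $T_j$ grows linearly in $j$, so $\sum_jT_j\sim A^2/N$, while the excess surface spent is only $O(A)$. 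With $A$ allowed to be of order $\delta\|x\|$ by the plaquette budget, this is not $O(\|x\|)$. Moreover the subsequent ``buffer'' bookkeeping treats each bad slab in isolation with radius $T_j/\tan\theta$, which does not control the cumulative drift profile needed for the cone condition at all intermediate longitudinal scales; it neither rules out compounding over clustered bad slabs nor accounts for drift accrued across runs of good slabs.

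The paper avoids the transverse-diameter route entirely. It works directly with the cumulative surface density: it defines marked indices $m_i^+$ and stopping points $r_i^+$ so that looking forward from $m_i^+$ the running total $\sum_{j}|\mathbf{S}_j^t|$ stays below $MN$ per slab until $r_{i+1}^+$, runs the same construction backwards to get $m_i^-,r_i^-$, and takes the intersection $\{m_i^+\}\cap\{m_i^-\}$. The budget $|\mathbf{S}_{\{0,x\}}|\leq(1+\delta)c_+\|x\|$ then bounds the total length of the exceptional intervals $[r_i^\pm,m_{i+1}^\pm]$, and choosing $M>2(1+\delta)c_+$ leaves a positive density of bi-directionally controlled good slabs; inside each, the $MN$-per-slab density control in both directions is exactly what is needed to place a $(t,\varepsilon)$-correct point for a suitable $\varepsilon=\varepsilon(\delta,d)$. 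This two-sided cumulative stopping-time construction is the step your geometric bookkeeping would need to replace, and the naive per-slab transverse-diameter bound does not do it.
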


\begin{proof}
Let $M$ a positive constant to be chosen later. If $\left\{  \mathcal{S}%
_{i_{l}}^{t}\right\}  _{l=1}^{\mathfrak{g}_{N}},$ we set $m_{1}^{+}:=i_{1}$
and
\begin{align}
r_{1}^{+}  &  :=\min\left\{  k\in\left\{  i_{1},..,\mathfrak{t}_{N}+1\right\}
:\sum_{j=i_{1}}^{k}\left\vert \mathbf{S}_{j}^{t}\right\vert >MN\left(
k-i_{1}\right)  \right\}  \ ;\\
m_{i}^{+}  &  :=\min\left\{  r_{i}^{+}+1,..,\mathfrak{t}_{N}+1\right\}
\cap\left\{  i_{1},..,i_{\mathfrak{g}_{N}}\right\}  \ ;\\
r_{i+1}^{+}  &  :=\min\left\{  k\in\left\{  m_{i}^{+},..,\mathfrak{t}%
_{N}+1\right\}  :\sum_{j=m_{i}^{+}}^{k}\left\vert \mathbf{S}_{j}%
^{t}\right\vert >MN\left(  k-m_{i}^{+}\right)  \right\}  \ .
\end{align}
Analogously, we define $m_{1}^{-}:=i_{\mathfrak{g}_{N}}$ and
\begin{align}
r_{1}^{-}  &  :=\max\left\{  k\in\left\{  0,..,i_{\mathfrak{g}_{N}}\right\}
:\sum_{j=k}^{\mathfrak{g}_{N}}\left\vert \mathbf{S}_{j}^{t}\right\vert
>MN\left(  i_{\mathfrak{g}_{N}}-k\right)  \right\}  \ ;\\
m_{i}^{-}  &  :=\max\left\{  0,..,r_{i}^{-}-1\right\}  \cap\left\{
i_{1},..,i_{\mathfrak{g}_{N}}\right\}  \ ;\\
r_{i+1}^{-}  &  :=\max\left\{  k\in\left\{  0,..,m_{i}^{-}\right\}
:\sum_{j=k}^{m_{i}^{-}}\left\vert \mathbf{S}_{j}^{t}\right\vert >MN\left(
m_{i}^{-}-k\right)  \right\}  \ .
\end{align}
Since $\left\vert \mathbf{S}_{\left\{  0,x\right\}  }\right\vert \leq\left(
1+\delta\right)  \varphi\left(  x\right)  $ and since by Remark 3 in \cite{CG}
there exists $c_{+}=c_{+}\left(  d\right)  >1$ such that $\varphi\left(
x\right)  \leq c_{+}\left\Vert x\right\Vert ,$ we get
\begin{equation}
\left(  1+\delta\right)  c_{+}\left\Vert x\right\Vert \geq\sum_{i\geq1}%
\sum_{j=r_{i}^{+}}^{m_{i+1}^{+}-1}\left\vert \mathbf{S}_{j}^{t}\right\vert
\geq\sum_{i\geq1}\left(  m_{i+1}^{+}-1-r_{i}^{+}\right)  MN
\end{equation}
as well as
\begin{equation}
\sum_{i\geq1}\left(  r_{i}^{-}-m_{i+1}^{-}-1\right)  \leq\frac{\left(
1+\delta\right)  c_{+}}{MN}\left\Vert x\right\Vert \ .
\end{equation}
Hence, if we denote by $\mathfrak{r}_{N}^{\pm}$ the number of the slabs
labelled by the $r_{i}^{+}$'s and $r_{i}^{-}$'s respectively and by
$\mathfrak{m}_{N}^{\pm}$ the number of the (good) slabs labelled by the
$m_{i}^{\pm}$'s, we obtain $\mathfrak{r}_{N}^{\pm}\leq\frac{\left(
1+\delta\right)  c_{+}}{MN}\left\Vert x\right\Vert $ that is, choosing
$M>2\left(  1+\delta\right)  c_{+},$%
\begin{equation}
\mathfrak{c}_{N}=\left\vert \left\{  m_{i}^{+}\right\}  _{i=1}^{\mathfrak{m}%
_{N}^{+}}\cap\left\{  m_{i}^{-}\right\}  _{i=1}^{\mathfrak{m}_{N}^{-}%
}\right\vert \geq\left(  1-2\frac{\left(  1+\delta\right)  c_{+}}{M}\right)
\frac{\left\Vert x\right\Vert }{N}\ .
\end{equation}
This implies that we can find $\varepsilon=\varepsilon\left(  \delta,d\right)
$ and, setting $\left\{  m_{i}^{+}\right\}  _{i=1}^{\mathfrak{m}_{N}^{+}}%
\cap\left\{  m_{i}^{-}\right\}  _{i=1}^{\mathfrak{m}_{N}^{-}}=:\left\{
m_{1},..,m_{\mathfrak{c}_{N}}\right\}  ,$ in each $\mathcal{S}_{m_{i}}%
^{t},i=1,..,\mathfrak{c}_{N},$ a $\left(  t,\varepsilon\right)  $-correct
point $z_{i}$ such that $\mathcal{D}_{\varepsilon}^{t}\left(  k\right)
\supset\bigcup_{j=m_{i}+1}^{m_{i+1}-1}\mathbf{D}_{N}^{t}\left(  j\right)  .$
\end{proof}

By construction, for any $k=1,..,\mathfrak{c}_{N},$ each $z_{k}$ belongs to a
given $\mathbf{D}_{N}^{t}\left(  i_{l}\right)  ,l=1,..,\mathfrak{g}_{N}.$
Therefore, if also $z_{k}+u=z_{k+1}\in\mathbf{D}_{N}^{t}\left(  i_{l}\right)
,$ by the finite-energy property of $\mathbb{P}_{q,p},$ the probability that
$\left\{  z_{k},z_{k}+u\right\}  \in\mathcal{E}^{t}\left(  0,x\right)  $ is
bounded below by $\beta=\beta\left(  M,N\right)  >0$ regardless of the
percolation configuration outside the $N$-set $\mathbf{D}_{N}^{t}\left(
i_{l}\right)  .$ Hence, for a fixed $N$-renormalized cluster $\mathfrak{C}%
_{N}$ containing a subset $\left\{  z_{1},..,z_{2L}\right\}  $ of $t$-correct
points such that, $\forall i=1,..,L,z_{2i}=z_{2i-1}+u$ and any pair $\left\{
z_{2i-1},z_{2i-1}+u\right\}  $ belong to distinct $N$-sets $\mathbf{D}_{N}%
^{t}\left(  i_{l}\right)  ,l\in\left\{  1,..,\mathfrak{g}_{N}\right\}  ,$ the
conditional distribution of $\left\vert \mathcal{E}^{t}\left(  0,x\right)
\right\vert $ given $\mathfrak{C}_{N}$ stochastically dominates the binomial
distribution of parameters $2L$ and $\beta.$ Since the number of $t$-correct
points $\mathfrak{k}_{N}\geq c_{5}\frac{\left\Vert x\right\Vert }{N},$ we have:

\begin{proposition}
\label{expt}For any $p\in\left(  p^{\ast},1\right)  $ sufficiently close to
$1$ and $\delta>\delta^{\ast}$ sufficienly small, with $p^{\ast}$ and
$\delta^{\ast}$ given in respectively (\ref{p*}) and (\ref{d*}), uniformly in
$x$ and in $t\in\mathfrak{d}\mathcal{W}\left(  x\right)  ,$ there exist two
positive constants $c_{6}=c_{6}\left(  \delta,\varepsilon\right)  ,c_{7}%
=c_{7}\left(  N,M,p,q,d\right)  $ such that
\begin{equation}
\mathbb{P}_{q,p}\left(  \left\vert \mathcal{E}^{t}\left(  0,x\right)
\right\vert <c_{6}\frac{\left\Vert x\right\Vert }{N}\}|\left\{  0<\left\vert
\mathbf{C}_{\left\{  0,x\right\}  }\right\vert <\infty\right\}  \right)  \leq
e^{-c_{7}\left\Vert x\right\Vert }\ .
\end{equation}

\end{proposition}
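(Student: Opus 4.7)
The plan is to combine three ingredients: (i) the exponential bound of Proposition~\ref{P1} on clusters whose separating surface $\mathbf{S}_{\{0,x\}}$ is atypically large; (ii) the deterministic extraction of $t$-correct points provided by the previous Lemma; and (iii) a Chernoff bound obtained by stochastically dominating $|\mathcal{E}^t(0,x)|$ by a binomial via the finite-energy property of $\mathbb{P}_{q,p}$. First I would condition on $\{0<|\mathbf{C}_{\{0,x\}}|<\infty\}$ and, by Proposition~\ref{P1}, restrict up to an exponentially small error to the event
\[
\bigl\{|\mathbf{S}_{\{0,x\}}|\le(1+\delta)\varphi(x)\bigr\},
\]
which, once $p$ is close enough to $1$ and $\delta>\delta^{\ast}$ is fixed small, has overwhelming conditional probability under $\mathbb{P}_{q,p}(\,\cdot\mid 0<|\mathbf{C}_{\{0,x\}}|<\infty)$.

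On this event the deterministic bookkeeping on bad slabs recalled just before the Lemma bounds their number by $2\delta\|x\|/N$, and the Lemma just proved produces a collection of at least $\mathfrak{c}_N\ge c_5\|x\|/N$ $t$-correct points $z_1,\dots,z_{\mathfrak{c}_N}$, each lying in a good slab $\mathcal{S}_{m_i}^{t}$ and associated to a distinct $N$-set $\mathbf{D}_N^{t}(i_l)$. From this sequence I would extract, by discarding a bounded fraction of the points (those too close to the boundary of their $N$-block in the $u$-direction, and thinning so that successive chosen pairs sit in different blocks), a subcollection $\{z_1,\dots,z_{2L}\}$ with $L$ still of order $\|x\|/N$ such that for every $i$ both $z_{2i-1}$ and $z_{2i-1}+u$ belong to a single $N$-set and different pairs lie in pairwise distinct $N$-sets.

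Finally, for any realization of $\mathfrak{C}_N$ carrying such a subcollection the finite-energy property of $\mathbb{P}_{q,p}$ gives, conditionally on the configuration outside the $N$-set containing the pair $\{z_{2i-1},z_{2i-1}+u\}$, a uniform lower bound $\beta=\beta(M,N,p,q,d)>0$ on the probability that this pair is a $t$-bond of $\mathbf{C}_{\{0,x\}}$. Because the $L$ pairs lie in pairwise disjoint $N$-sets, the corresponding indicators are conditionally independent given $\mathfrak{C}_N$, so $|\mathcal{E}^t(0,x)|$ conditionally dominates a $\mathrm{Bin}(L,\beta)$ variable; a standard Chernoff bound then estimates the probability that $|\mathcal{E}^t(0,x)|<c_6\|x\|/N$ by $e^{-c_7\|x\|}$ for suitable $c_6,c_7>0$, and combining with the reduction of the first step yields the proposition. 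The step I expect to be the most delicate is precisely the extraction of the subcollection of pairs in pairwise distinct $N$-sets while keeping $L$ proportional to $\|x\|/N$ and while preserving, through finite energy, the conditional independence needed for the binomial domination; this is where the quantitative linear-in-$\|x\|$ lower bound on $\mathfrak{c}_N$ from the Lemma and a careful choice of the blocks to which finite energy is applied are essential.
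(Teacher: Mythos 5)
Your proposal matches the paper's argument in both structure and substance: the paper likewise reduces, via Proposition \ref{P1}, to the event $\{|\mathbf{S}_{\{0,x\}}|\le(1+\delta)\varphi(x)\}$, invokes the deterministic bad-slab count and the preceding Lemma to obtain $\mathfrak{c}_N\ge c_5\|x\|/N$ correct points, and then uses the finite-energy property on disjoint $N$-sets to deduce that $|\mathcal{E}^t(0,x)|$ conditionally on $\mathfrak{C}_N$ stochastically dominates a binomial, from which the exponential bound follows by a Chernoff estimate. The only cosmetic difference is that the paper states a $\mathrm{Bin}(2L,\beta)$ domination while you write $\mathrm{Bin}(L,\beta)$ for $L$ pairs, which is the more careful reading and does not change the conclusion; your extra care about extracting pairs in pairwise distinct $N$-sets is exactly the point the paper glosses over in a single clause.
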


\subsubsection{Reduction to a one-dimensional thermodynamics}

Since by Definition \ref{d.cpts} $\mathbf{K}^{t}\left(  0,x\right)
\subseteq\mathbf{B}^{t}\left(  0,x\right)  ,$ it is a totally ordered set with
respect to the scalar product with $t,$ we can relabel the elements of
$\mathbf{K}^{t}\left(  0,x\right)  $ in increasing order and consider
$\mathbf{K}_{e}^{t}\left(  0,x\right)  :=\bigvee_{i\geq1}\left\{
b_{i},b_{i+u}\right\}  \subseteq\mathbf{K}^{t}\left(  0,x\right)  ,$ where
$\forall i\geq1,\left\langle b_{i+1},t\right\rangle >\left\langle
b_{i},t\right\rangle ,\left\{  b_{i},b_{i+u}\right\}  \in\mathcal{E}%
^{t}\left(  0,x\right)  ,$ which implies that $\mathbf{C}_{\left\{
b_{i}+u,b_{i+1}\right\}  }^{t}$ is a $t$-irreducible subcluster of
$\mathbf{C}_{\left\{  0,x\right\}  }.$ Therefore, we have proven that, with
probability larger than $1-e^{-c_{7}\left\Vert x\right\Vert },$ there exists
$\varepsilon=\varepsilon\left(  \delta\right)  \in\left(  0,1\right)  $ such
that, as in the subcritical case \cite{CIV2}, $\mathbf{C}_{\{0,x\}}$ can be
decomposed as a concatenation of $\left(  t,\varepsilon\right)  $-irreducible
compatible subclusters, that is $\mathbf{C}_{\left\{  0,x\right\}  }%
=\gamma^{b}\coprod\gamma_{1}\coprod..\coprod\gamma_{n}\coprod\gamma^{f},$ for
some $n\in\mathbb{N},n\geq\frac{c_{6}}{N}\left\Vert x\right\Vert ,$ where
$\gamma^{b}=\mathbf{C}_{\left\{  0,b_{1}\right\}  }\cap\mathcal{H}_{b_{1}%
}^{t,-}$ is $\left(  t,\varepsilon\right)  $-backward irreducible, $\gamma
^{f}=\mathbf{C}_{\left\{  b_{n+1},x\right\}  }\cap\mathcal{H}_{b_{n}}^{t,+}$
is $\left(  t,\varepsilon\right)  $-forward irreducible and, for
$i=1,..,n,\gamma_{i}=\mathbf{C}_{\left\{  b_{i}+u,b_{i+1}\right\}  }^{t}$ is
$\left(  t,\varepsilon\right)  $-irreducible. From this follows, by Definition
\ref{comp}, that $\mathbf{S}_{\{0,x\}}$ can be decomposed as a concatenation
of compatible subsets, namely
\begin{equation}
\mathbf{S}_{\left\{  0,x\right\}  }=\mathbf{s}^{b}\coprod\mathbf{s}_{1}%
\coprod..\coprod\mathbf{s}_{n}\coprod\mathbf{s}^{f}\ , \label{Sirrdcp}%
\end{equation}
with $\mathbf{s}^{b}=\left(  \overline{\partial}\gamma^{b}\right)  ^{\ast}%
\cap\mathbf{S}_{\left\{  0,x\right\}  },\mathbf{s}^{f}=\left(  \overline
{\partial}\gamma^{f}\right)  ^{\ast}\cap\mathbf{S}_{\left\{  0,x\right\}  }$
and for $i=1,..,n,\mathbf{s}_{i}=\left(  \overline{\partial}\gamma_{i}\right)
^{\ast}\cap\mathbf{S}_{\left\{  0,x\right\}  }.$ The elements of such a
decomposition of $\mathbf{S}_{\left\{  0,x\right\}  }$ will also be called
$t$\emph{-irreducible}.

If $s$ is a realization of the random element $\mathbf{s}_{i},i=1,..,n,$ part
of the just described decomposition of $\mathbf{S}_{\left\{  0,x\right\}  },$
considering the just given representation of $\mathbf{K}_{e}^{t}\left(
0,x\right)  ,$ we define
\begin{align}
i_{-}\left(  s\right)   &  :=\max\left\{  i\geq1:s\subset\mathcal{H}_{b_{i}%
}^{t,+}\ ;\ b_{i}\in\mathbf{K}_{e}^{t}\left(  0,x\right)  \right\}  \ ,\\
i_{+}\left(  s\right)   &  :=\min\left\{  i\geq i_{-}\left(  s\right)
+1:s\subset\mathcal{H}_{b_{i}}^{t,-}\ ;\ b_{i}\in\mathbf{K}_{e}^{t}\left(
0,x\right)  \right\}  \ .
\end{align}
Moreover, if $s$ is a realization of $\mathbf{s}_{b},$ we define
\begin{equation}
i_{+}\left(  s\right)  :=\min\left\{  i\geq1:s\subset\mathcal{H}_{b_{i}}%
^{t,-}\ ;\ b_{i}\in\mathbf{K}_{e}^{t}\left(  0,x\right)  \right\}  \ ,
\end{equation}
and set $b_{-}\left(  s\right)  :=b_{i_{-}\left(  s\right)  },b_{+}\left(
s\right)  :=b_{i_{+}\left(  s\right)  }.$ Clearly, by definition,
$e_{-}\left(  s\right)  :=\left\{  b_{-}\left(  s\right)  ,b_{-}\left(
s\right)  +u\right\}  $ and $e_{+}\left(  s\right)  :=\left\{  b_{+}\left(
s\right)  -u,b_{+}\left(  s\right)  \right\}  $ belong to $\mathcal{E}%
^{t}\left(  0,x\right)  .$ We also set $\Lambda_{s}$ to be the subset $V$ of
$\mathbb{Z}^{d}$ such that $\left(  \partial G\left[  V\right]  \backslash
\mathcal{E}^{t}\left(  0,x\right)  \right)  ^{\ast}=s.$ Hence, defining, for
any realization $s$ of $\mathbf{s}_{i},i=1,..,n,$%
\begin{align}
\Omega_{s}:=  &  \left\{  \omega\in\Omega:\exists\gamma\sqsubset G\left(
\omega\right)  \text{\ s.t.\ }\left(  \overline{\partial}\gamma\right)
^{\ast}\supset s\ ;\ \omega_{e}=1,\right. \\
&  \left.  \forall e\in\left\{  e_{-}\left(  s\right)  ,e_{-}\left(  s\right)
+u\right\}  \bigvee\left\{  e_{+}\left(  s\right)  -u,e_{+}\left(  s\right)
,\right\}  \right\}  \ ,\nonumber
\end{align}
for any realization $s_{b}$ of $\mathbf{s}_{b}$%
\begin{align}
\Omega_{s_{b}}:=  &  \left\{  \omega\in\Omega:\exists\gamma\sqsubset G\left(
\omega\right)  \text{\ s.t.\ }\left(  \overline{\partial}\gamma\right)
^{\ast}\supset s_{b}\ ;\ \omega_{e}=1,\right. \\
&  \left.  \forall e\in\left\{  e_{+}\left(  s_{b}\right)  -u,e_{+}\left(
s_{b}\right)  \right\}  \right\}  \ \nonumber
\end{align}
and for any realization $s_{f}$ of $\mathbf{s}_{f}$%
\begin{align}
\Omega_{s_{f}}:=  &  \left\{  \omega\in\Omega:\exists\gamma\sqsubset G\left(
\omega\right)  \text{\ s.t.\ }\left(  \overline{\partial}\gamma\right)
^{\ast}\supset s_{f}\ ;\ \omega_{e}=1,\right. \\
&  \left.  \forall e\in\left\{  e_{-}\left(  s_{f}\right)  ,e_{-}\left(
s_{f}\right)  +u\right\}  \right\}  \ ,\nonumber
\end{align}
up to factors of order $1+o\left(  e^{-c_{7}\left\Vert x\right\Vert }\right)
,$%
\begin{gather}
\mathbb{P}_{q,p}\left\{  0\longleftrightarrow x\ ,\ \left\vert \mathbf{C}%
_{\{0,x\}}\right\vert <\infty\right\}  =\label{dcmp}\\
\sum_{s_{b}\Supset0}\sum_{s_{f}\Supset x}\sum_{n\geq1}\sum_{\left(
s_{1},..,s_{n}\right)  }^{\ast}\mathbb{P}_{q,p}\left\{  0\longleftrightarrow
x\ ,\ \mathbf{S}_{\left\{  0,x\right\}  }=s_{b}%
{\textstyle\coprod}
s_{1}%
{\textstyle\coprod}
..%
{\textstyle\coprod}
s_{n}%
{\textstyle\coprod}
s_{f}\right\} \nonumber\\
=\sum_{s_{b}\Supset0}\sum_{s_{f}\Supset x}\sum_{n\geq1}\sum_{\left(
s_{1},..,s_{n}\right)  }^{\ast}\mathbb{P}_{q,p}\left(  \Omega_{b}\cap\left(
{\textstyle\bigcap_{i=1}^{n}}
\Omega_{i}\right)  \cap\Omega_{f}\right)  \ ,
\end{gather}
where $\Omega_{\#}:=\Omega_{s_{\#}},\#=b,f,1,..,n,$ and $\sum_{s_{b}\Supset
0},\sum_{s_{f}\Supset x}$ stand respectively for the sum over the elements of
\begin{align}
&  \left\{  s\subset\left(  \mathbb{E}\right)  ^{\ast}:s=\left(
\overline{\partial}\gamma\right)  ^{\ast}\cap\mathbf{S}_{\left\{  0,x\right\}
}\ \text{s.t. }\gamma\ni0\text{ and is }t\text{-backward irreducible}\right\}
\ ,\\
&  \left\{  s\subset\left(  \mathbb{E}\right)  ^{\ast}:s=\left(
\overline{\partial}\gamma\right)  ^{\ast}\cap\mathbf{S}_{\left\{  0,x\right\}
}\ \text{s.t. }\gamma\ni x\text{ and is }t\text{-forward irreducible}\right\}
\ ,
\end{align}
while the last sum is over all the realizations $\left(  s_{1},..,s_{n}%
\right)  $ of the strings $\left(  \mathbf{s}_{1},..,\mathbf{s}_{n}\right)  $
of $t$-irreducible compatible subsets of $\mathbf{S}_{\left\{  0,x\right\}
}.$

\paragraph{Decomposition of probabilities}

Setting, for any $n\geq1,$%
\begin{equation}
\left\{  s_{\#}\right\}  :=\left\{  \omega\in\Omega:\mathbf{s}_{\#}\left(
\omega\right)  =s_{\#}\right\}  \;,\;\#=b,f,1,..,n \label{def_cf_s}%
\end{equation}
and
\begin{align}
\mathbb{P}_{\Lambda_{s_{b}}\cup\left\{  b_{-}\left(  s_{b}\right)  \right\}
;q,p}^{\text{f}}  &  =:\mathbb{P}_{q,p;s_{b}^{\ast}}^{\text{f}}\ ;\ \mathbb{P}%
_{\left\{  b_{-}\left(  s_{f}\right)  \right\}  \cup\Lambda_{s_{f}}%
;q,p}^{\text{f}}=:\mathbb{P}_{q,p;s_{f}^{\ast}}^{\text{f}}\ ,\\
\mathbb{P}_{\left\{  b_{-}\left(  s_{i}\right)  \right\}  \cup\Lambda_{s_{i}%
}\cup\left\{  b_{-}\left(  s_{i}\right)  \right\}  ;q,p}^{\text{f}}  &
=:\mathbb{P}_{q,p;s_{i}^{\ast}}^{\text{f}}\ ,\\
\mathbb{P}_{\Lambda_{s_{b}}\cup\left(
{\textstyle\bigcup_{i=1}^{n}}
\Lambda_{s_{i}}\right)  \cup\Lambda_{s_{f}};q,p}^{\text{f}}  &  =:\mathbb{P}%
_{q,p;\left(  s_{b}%
{\textstyle\coprod}
s_{1}%
{\textstyle\coprod}
..%
{\textstyle\coprod}
s_{n}%
{\textstyle\coprod}
s_{f}\right)  ^{\ast}}^{\text{f}}\ ,
\end{align}
we have
\begin{gather}
\mathbb{P}_{q,p}\left(  \Omega_{b}\cap\left(
{\textstyle\bigcap_{i=1}^{n}}
\Omega_{i}\right)  \cap\Omega_{f}|\left\{  s_{b}\right\}  \cap\left(
{\textstyle\bigcap_{i=1}^{n}}
\left\{  s_{i}\right\}  \right)  \cap\left\{  s_{f}\right\}  \right) \\
=\mathbb{P}_{q,p;\left(  s_{b}%
{\textstyle\coprod}
s_{1}%
{\textstyle\coprod}
..%
{\textstyle\coprod}
s_{n}%
{\textstyle\coprod}
s_{f}\right)  ^{\ast}}^{\text{f}}\left(  \Omega_{b}\cap\left(
{\textstyle\bigcap_{i=1}^{n}}
\Omega_{i}\right)  \cap\Omega_{f}\right) \nonumber\\
=\mathbb{P}_{q,p;s_{b}^{\ast}}^{\text{f}}\left(  \Omega_{b}\right)
\mathbb{P}_{q,p;s_{f}^{\ast}}^{\text{f}}\left(  \Omega_{f}\right)  \prod
_{i=1}^{n}\mathbb{P}_{q,p;s_{i}^{\ast}}^{\text{f}}\left(  \Omega_{i}\right)
\nonumber
\end{gather}
and
\begin{equation}
\mathbb{P}_{q,p}\left(  \Omega_{\#}\right)  =\mathbb{P}_{q,p}\left(
\Omega_{\#}|\left\{  s_{\#}\right\}  \right)  \mathbb{P}_{q,p}\left\{
s_{\#}\right\}  =\mathbb{P}_{q,p;s_{\#}^{\ast}}^{\text{f}}\left(  \Omega
_{\#}\right)  \mathbb{P}_{q,p}\left\{  s_{\#}\right\}  \ ,\ \#=b,f\ .
\end{equation}
Therefore,
\begin{align}
\mathbb{P}_{q,p}\left(  \Omega_{b}\cap\left(
{\textstyle\bigcap_{i=1}^{n}}
\Omega_{i}\right)  \cap\Omega_{f}\right)   &  =\mathbb{P}_{q,p}\left(
\Omega_{b}\right)  \mathbb{P}_{q,p}\left(  \Omega_{f}\right)  \prod_{i=1}%
^{n}\mathbb{P}_{q,p;s_{i}^{\ast}}^{\text{f}}\left(  \Omega_{i}\right)
\times\\
&  \times\frac{\mathbb{P}_{q,p}\left(  \left\{  s_{b}\right\}  \cap\left(
{\textstyle\bigcap_{i=1}^{n}}
\left\{  s_{i}\right\}  \right)  \cap\left\{  s_{f}\right\}  \right)
}{\mathbb{P}_{q,p}\left\{  s_{b}\right\}  \mathbb{P}_{q,p}\left\{
s_{f}\right\}  }\ .\nonumber
\end{align}
Furthermore, the last term in the r.h.s. of the previous formula admits the
equivalent decompositions
\begin{align}
\frac{\mathbb{P}_{q,p}\left(  \left\{  s_{b}\right\}  \cap\left(
{\textstyle\bigcap_{i=1}^{n}}
\left\{  s_{i}\right\}  \right)  \cap\left\{  s_{f}\right\}  \right)
}{\mathbb{P}_{q,p}\left\{  s_{b}\right\}  \mathbb{P}_{q,p}\left\{
s_{f}\right\}  }  &  =\frac{\mathbb{P}_{q,p}\left(  \left\{  s_{b}\right\}
|\left(
{\textstyle\bigcap_{i=1}^{n}}
\left\{  s_{i}\right\}  \right)  \cap\left\{  s_{f}\right\}  \right)
}{\mathbb{P}_{q,p}\left\{  s_{b}\right\}  }\times\label{decPs}\\
&  \times\prod_{j=1}^{n-1}\mathbb{P}_{q,p}\left(  \left\{  s_{j}\right\}
|\left(
{\textstyle\bigcap_{i=j+1}^{n}}
\left\{  s_{i}\right\}  \right)  \cap\left\{  s_{f}\right\}  \right)
\mathbb{P}_{q,p}\left(  \left\{  s_{n}\right\}  |\left\{  s_{f}\right\}
\right) \nonumber\\
&  =\frac{\mathbb{P}_{q,p}\left(  \left\{  s_{f}\right\}  |\left(
{\textstyle\bigcap_{i=1}^{n}}
\left\{  s_{i}\right\}  \right)  \cap\left\{  s_{b}\right\}  \right)
}{\mathbb{P}_{q,p}\left\{  s_{f}\right\}  }\times\\
&  \times\prod_{j=0}^{n-2}\mathbb{P}_{q,p}\left(  \left\{  s_{n-j}\right\}
|\left(
{\textstyle\bigcap_{i=1}^{n-j-1}}
\left\{  s_{i}\right\}  \right)  \cap\left\{  s_{b}\right\}  \right)
\mathbb{P}_{q,p}\left(  \left\{  s_{1}\right\}  |\left\{  s_{b}\right\}
\right)  \ .\nonumber
\end{align}

Once we have fixed $s_{b}$ and $s_{f},$ we choose one of the just defined
representations, say the first, and, for any $n\in\mathbb{N},$ denoting by
$\mathcal{I}_{t}^{n}$ the collection of strings $\left(  s_{1},..,s_{n}%
\right)  $ of $t$-irreducible compatible subsets of $\mathbf{S}_{\left\{
0,x\right\}  },$ we set
\begin{align}
\mathcal{I}_{t}^{n} &  \ni\left(  s_{1},..,s_{n}\right)  \longmapsto g\left(
s_{1},..,s_{n};s_{b},s_{f}\right)  :=\frac{\mathbb{P}_{q,p}\left(  \left\{
s_{b}\right\}  |\left(
{\textstyle\bigcap_{i=1}^{n}}
\left\{  s_{i}\right\}  \right)  \cap\left\{  s_{f}\right\}  \right)
}{\mathbb{P}_{q,p}\left\{  s_{b}\right\}  }\in\left[  0,+\infty\right)  \ ,\\
\mathcal{I}_{t}^{n} &  \ni\left(  s_{1},..,s_{n}\right)  \longmapsto\Xi\left(
s_{1},..,s_{n};s_{f}\right)  :=\log\mathbb{P}_{q,p;s_{1}^{\ast}}^{\text{f}%
}\left(  \Omega_{1}\right)  \mathbb{P}_{q,p}\left(  \left\{  s_{1}\right\}
|\left(
{\textstyle\bigcap_{j=2}^{n}}
\left\{  s_{j}\right\}  \right)  \cap\left\{  s_{f}\right\}  \right)
\in\left(  -\infty,0\right]  \ .
\end{align}

Let $\mathfrak{S}_{t}:=\bigcup_{n\in\mathbb{N}}\mathfrak{I}_{t}^{n},$ where
$\mathfrak{I}_{t}^{n}$ is the set of infinite sequences $\underline
{s}:=\left(  s_{1},...\right)  $ such that the string composed by the first
$n$ symbols appearing in $\underline{s}$ label the elements of $\mathcal{I}%
_{t}^{n},$ while the remaining symbols are fixed to be the empty set. Setting,
for any $\underline{s},\underline{s}^{\prime}\in\mathfrak{S}_{t}$ such that
$\underline{s}\neq\underline{s}^{\prime},\mathbf{i}\left(  \underline
{s},\underline{s}^{\prime}\right)  :=\min\left\{  k\geq1:s_{k}\neq
s_{k}^{\prime}\right\}  $ and, for any complex-valued function $f$ on
$\mathfrak{S}_{t},\mathbf{var}_{k}\left(  f\right)  :=\sup_{\left\{
\underline{s},\underline{s}^{\prime}\in\mathfrak{S}_{t}\ :\ \mathbf{i}\left(
\underline{s},\underline{s}^{\prime}\right)  \geq k\right\}  }\left\vert
f\left(  \underline{s}\right)  -f\left(  \underline{s}^{\prime}\right)
\right\vert ,$ let $\mathfrak{H}_{\theta}$ be the Banach space of real bounded
continuous functions on $\mathfrak{S}_{t}$ which are also uniformly H\"{o}lder
continuous for a given exponent $\theta<1$ endowed with the norm $\left\Vert
\cdot\right\Vert _{\theta}:=\left\Vert \cdot\right\Vert _{\infty}+\sup
_{k\geq2}\frac{\mathbf{var}_{k}\left(  \cdot\right)  }{\theta^{k-1}}.$

In the next subsection we will prove that $g\left(  \cdot;s_{b},s_{f}\right)
$ and $\Xi\left(  \cdot;s_{f}\right)  $ admit a unique extension on
$\mathfrak{H}_{\theta}$ for some $\theta<1$ denoted respectively by
$g_{s_{b},s_{f}}$ and $\Xi_{s_{f}}.$ This will allow us to define the Ruelle's
operator
\begin{equation}
\mathcal{L}_{s_{f}}f\left(  \underline{s}\right)  :=\sum_{s\in\mathcal{I}_{t}%
}e^{\Xi_{s_{f}}\left(  s,\underline{s}\right)  }f\left(  s,\underline
{s}\right)  \ ,\;f\in\mathfrak{H}_{\theta}\ , \label{Rue}%
\end{equation}
where $\mathcal{I}_{t}:=\mathcal{I}_{t}^{1},$ whose largest isolated
eigenvalue is finite and has multiplicity $1,$ since by Proposition
\ref{expt},%
\begin{equation}
\sup_{\underline{s}\in\mathfrak{S}_{t}}\sum_{s\in\mathcal{I}_{t}}e^{\Xi
_{s_{f}}\left(  s,\underline{s}\right)  }<\infty\ .
\end{equation}
Therefore, up to factors of order $1+o\left(  e^{-c_{7}\left\Vert x\right\Vert
}\right)  ,$ by (\ref{dcmp}),
\begin{equation}
\mathbb{P}_{q,p}\left\{  0\longleftrightarrow x\ ,\ \left\vert \mathbf{C}%
_{\{0,x\}}\right\vert <\infty\right\}  =\sum_{s_{b}\Supset0}\sum_{s_{f}\Supset
x}\sum_{n\geq1}\mathbb{P}_{q,p}\left(  \Omega_{b}\right)  \mathbb{P}%
_{q,p}\left(  \Omega_{f}\right)  \left[  \mathcal{L}_{s_{f}}\right]
^{n}g_{s_{b},s_{f}}\left(  \emptyset\right)  \ ,
\end{equation}
where $\emptyset$ stands for the sequence $\left(  \varnothing,...\right)
\in\mathfrak{S}_{t}.$

Let
\begin{equation}
\mathfrak{K}_{q,p}:=\bigcap_{\hat{y}\in\mathbb{S}^{d-1}}\left\{
w\in\mathbb{R}^{d}:\left\langle w,\hat{y}\right\rangle \leq\tau_{q,p}\left(
\hat{y}\right)  \right\}
\end{equation}
the convex body polar with respect to $\mathcal{U}_{q,p}:=\left\{
y\in\mathbb{R}^{d}:\tau_{q,p}\left(  y\right)  \leq1\right\}  .$ Since
$\tau_{q,p}$ and $\bar{\varphi}$ are equivalent norms in $\mathbb{R}^{d},$ if
$v\in\mathfrak{dK}_{q,p}$ is polar to $x$ (i.e. $\left\langle v,x\right\rangle
=\tau_{q,p}\left(  x\right)  $), we can choose $t=t\left(  v\right)  $ as one of the
elements of $\mathfrak{d}\mathcal{W}\left(  x\right)  $ maximizing its scalar
product with $v.$ Notice that, by translation invariance of the RC random
field, we can consider any realization of $\mathbf{S}_{\left\{  0,x\right\}
}$\ as a collection $s_{b},\left(  s_{1},..,s_{n}\right)  ,s_{f}$ of
realizations of its $t$-irreducible components modulo $\mathbb{Z}^{d}$-shift
patched together. Then, for any element $\mathbf{s}_{i},i\geq1$ of the
$t$-irreducible decomposition of $\mathbf{S}_{\left\{  0,x\right\}  }%
$(\ref{Sirrdcp}) we define
\begin{equation}
X\left(  \mathbf{s}_{i}\right)  :=b_{i+1}-b_{i}\ .
\end{equation}
Thus, up to factors of order $1+o\left(  e^{-c_{7}\left\Vert x\right\Vert
}\right)  ,$ we can write
\begin{gather}
e^{\tau_{q,p}\left(  x\right)  }\mathbb{P}_{q,p}\left\{  0\longleftrightarrow
x\ ,\ \left\vert \mathbf{C}_{\{0,x\}}\right\vert <\infty\right\}
=\label{e^tP}\\
=\sum_{y\in\mathcal{H}_{0}^{v,+}\cap\mathcal{H}_{0}^{t,+}}\sum_{z\in
\mathcal{H}_{x}^{v,-}\cap\mathcal{H}_{x}^{t,}}\sum_{s_{b}\Supset-y}\sum
_{s_{f}\Supset x-z}\mathbb{P}_{q,p}\left(  \Omega_{b}\right)  \mathbb{P}%
_{q,p}\left(  \Omega_{f}\right)  e^{\left\langle v,x-\left(  z-y\right)
\right\rangle }\times\nonumber\\
\times\sum_{n\geq1}\sum_{\left(  s_{1},..,s_{n}\right)  \ :\ \sum_{i=1}%
^{n}X\left(  s_{i}\right)  =z-y}^{\ast}\frac{\mathbb{P}_{q,p}\left(
\Omega_{b}\cap\left(
{\textstyle\bigcap_{i=1}^{n}}
\Omega_{i}\right)  \cap\Omega_{f}\right)  }{\mathbb{P}_{q,p}\left(  \Omega
_{b}\right)  \mathbb{P}_{q,p}\left(  \Omega_{f}\right)  }e^{\left\langle
v,z-y\right\rangle }\nonumber\\
=\sum_{y\in\mathcal{H}_{0}^{v,+}\cap\mathcal{H}_{0}^{t,+}}\sum_{z\in
\mathcal{H}_{x}^{v,-}\cap\mathcal{H}_{x}^{t,-}}\sum_{s_{b}\Supset-y}%
\sum_{s_{f}\Supset x-z}\sum_{n\geq1}\mathbb{P}_{q,p}\left(  \Omega_{b}\right)
\mathbb{P}_{q,p}\left(  \Omega_{f}\right)  e^{\left\langle v,x-\left(
z-y\right)  \right\rangle }\left[  \mathcal{L}_{s_{f}}^{v}\right]
^{n}g_{s_{b},s_{f}}\left(  \emptyset\right)  \ ,\nonumber
\end{gather}
where, assuming the shifts of $t$-backward and $t$-forward irreducible
clusters are normalised in such a way that $b_{1}=b_{n+1}=0,\sum_{s_{b}%
\Supset-y},\sum_{s_{f}\Supset x-y}$ now stand respectively for the sum over
the elements of
\begin{align}
&  \left\{  s\subset\left(  \mathbb{E}\right)  ^{\ast}:s=\left(
\overline{\partial}\gamma\right)  ^{\ast}\cap\mathbf{S}_{\left\{  0,x\right\}
}\ \text{s.t. }\gamma\ni-y\text{ and is }t\text{-backward irreducible}%
\right\}  \ ,\\
&  \left\{  s\subset\left(  \mathbb{E}\right)  ^{\ast}:s=\left(
\overline{\partial}\gamma\right)  ^{\ast}\cap\mathbf{S}_{\left\{  0,x\right\}
}\ \text{s.t. }\gamma\ni x-z\text{ and is }t\text{-forward irreducible}%
\right\}
\end{align}
and, $\mathcal{L}_{s_{f}}^{v}$ is the \emph{tilted} Ruelle's operator on
$\mathfrak{H}_{\theta}$ defined, as in (\ref{Rue}), by the potential $\Xi
_{v}:\mathcal{I}_{t}^{n}\longrightarrow\mathbb{R}$ such that
\begin{equation}
\Xi_{v}\left(  s_{1},..,s_{n};s_{f}\right)  :=\log e^{\left\langle v,X\left(
s_{1}\right)  \right\rangle }\mathbb{P}_{q,p;s_{1}^{\ast}}^{\text{f}}\left(
\Omega_{1}\right)  \mathbb{P}_{q,p}\left(  \left\{  s_{1}\right\}  |\left(
{\textstyle\bigcap_{j=2}^{n}}
\left\{  s_{j}\right\}  \right)  \cap\left\{  s_{f}\right\}  \right)  \ .
\end{equation}

We refer the reader to \cite{CIV1} sections 3.2 and 4 for further details on
Ruelle's Perron-Frobenius theorem on countable alphabets.

\paragraph{Polymer expansion for the supercritical Random Cluster model}

A polymer expansion for the supercritical Random Cluster model has already
been set up in \cite{PS} for any $q>0.$ However, in order to prove Proposition
\ref{P2} below, instead of adapting to our purpose the formalism developed in
that work, we find it more convenient to perform the expansion in a form
closer to the one presented in \cite{KP}.

We can look at the elements of the collection of the connected subgraphs of
finite order of $\mathfrak{G}=\left(  \left(  \mathbb{E}^{d}\right)  ^{\ast
},\mathfrak{E}\right)  ,$ where $\mathfrak{E}$ is defined in (\ref{defFrakE}),
as a set of \emph{polymers} which we denote by $\mathsf{S}.$ Two polymers
$\mathsf{s,s}^{\prime}\in\mathsf{S}$ are said to be \emph{compatible}, and we
write $\mathsf{s}\sim\mathsf{s}^{\prime},$ if they are not connected (as
subgraphs of $\mathfrak{G}$), otherwise are said to be \emph{incompatible} and
we write $\mathsf{s}\nsim\mathsf{s}^{\prime}.$ Given $S\subset\mathsf{S},$ we
denote by $\mathfrak{P}\left(  S\right)  $ the collection of the subsets of
$S$ consisting of mutually compatible polymers and call \emph{contours} the
elements of $\mathfrak{P}_{0}\left(  S\right)  :=\left\{  \sigma
\in\mathfrak{P}\left(  S\right)  :\left\vert \sigma\right\vert <\infty
\right\}  .$ We also set $\mathfrak{P}:=\mathfrak{P}\left(  \mathsf{S}\right)
,\mathfrak{P}_{0}:=\mathfrak{P}_{0}\left(  \mathsf{S}\right)  .$ Given
$S\in\mathcal{P}_{f}\left(  \mathsf{S}\right)  ,\mathsf{s}\in\mathsf{S}$ we
write $S\nsim\mathsf{s}$ if there exists $\mathsf{s}^{\prime}\in S$ such that
$\mathsf{s}^{\prime}\nsim\mathsf{s}.$ Moreover, we call $S$ a \emph{polymer
cluster} if it cannot be decomposed as a union of $S_{1},S_{2}\in
\mathcal{P}_{f}\left(  \mathsf{S}\right)  $ such that every pair
$\mathsf{s}_{1}\in S_{1},\mathsf{s}_{2}\in S_{2}$ is compatible. We denote by
$\mathsf{C}\left(  S\right)  $ the collection of polymer clusters in $S$ and
let $\mathsf{C}$ be the collection of polymer clusters in $\mathsf{S}.$

Given $\Lambda\subset\subset\mathbb{Z}^{d},$ we denote by $\mathsf{S}%
_{\Lambda}$ the subset of $\mathsf{S}$ such that, for any $\mathsf{s}%
\in\mathsf{S}_{\Lambda},\mathsf{s}\subset G\left[  \left(  \mathbb{E}%
^{\Lambda}\right)  ^{\ast}\right]  ,$ where we recall that, for any
$\Delta\subset\mathbb{Z}^{d},G\left[  \left(  \mathbb{E}^{\Delta}\right)
^{\ast}\right]  $ is the subgraph of $\mathfrak{G}$ induced by $\left(
\mathbb{E}^{\Delta}\right)  ^{\ast}.$ We also set $E_{S}^{\ast}:=V\left(
\bigcup_{\mathsf{s}\in S}\mathsf{s}\right)  .$ Then, we define $\kappa
_{\text{w}}\left(  S\right)  $ to be the number of the components of $\left(
\mathbb{Z}^{d},\mathbb{E}^{d}\backslash E_{S}\right)  $ and $\kappa_{\text{f}%
}\left(  S\right)  $ to be the number of the components of $\left(
\Lambda,\mathbb{E}^{\Lambda}\backslash E_{S}\right)  .$ Moreover, for any
$\mathsf{s}\in S_{\Lambda},$ we set $\left\Vert \mathsf{s}\right\Vert
_{\#}:=\kappa_{\#}\left(  \mathsf{s}\right)  -1,\#=$f,w.

Let $\mathfrak{P}_{\Lambda}:=\mathfrak{P}\left(  \mathsf{S}_{\Lambda}\right)
.$ We remark that, given $\mathsf{s}\in S_{\Lambda},$ for any $\sigma
\in\mathfrak{P}_{\Lambda}$ such that $\sigma\ni\mathsf{s},\left\Vert
\mathsf{s}\right\Vert _{\#}=\kappa_{\#}\left(  \sigma\right)  -\kappa
_{\#}\left(  \sigma\backslash\mathsf{s}\right)  .$

The function
\begin{equation}
\mathfrak{P}_{\Lambda}\ni\sigma\longmapsto\Psi_{\#}\left(  \sigma\right)
:=\prod_{\mathsf{s}\in\sigma}\left(  \frac{1-p}{p}\right)  ^{\left\vert
\mathsf{s}\right\vert }q^{\left\Vert \mathsf{s}\right\Vert _{\#}}\in
\mathbb{R}^{+}\;,\quad\#=\text{f,w\ ,}%
\end{equation}
where we set $\Psi_{\#}\left(  \varnothing\right)  :=1,$ is called
\emph{activity} of the contour $\sigma.$ Since, $\left\Vert \mathsf{s}%
\right\Vert _{\#}\leq\left\vert \mathsf{s}\right\vert ,$we get
\begin{equation}
\Psi_{\#}\left(  \sigma\right)  \leq\prod_{\mathsf{s}\in\sigma}\left(
\frac{1-p}{p}q\right)  ^{\left\vert \mathsf{s}\right\vert }\ .
\end{equation}
We then define, for any $S\subseteq\mathsf{S}_{\Lambda},$%
\begin{equation}
\mathfrak{Z}_{q,p}^{\#}\left(  S\right)  :=\sum_{\sigma\in\mathfrak{P}\left(
S\right)  }\Psi_{\#}\left(  \sigma\right)  =\sum_{\sigma\in\mathfrak{P}\left(
S\right)  }\prod_{\mathsf{s}\in\sigma}\left(  \frac{1-p}{p}\right)
^{\left\vert \mathsf{s}\right\vert }q^{\left\Vert \mathsf{s}\right\Vert _{\#}%
}\;,\quad\#=\text{f,w.} \label{defZ}%
\end{equation}
Considering for each $\mathsf{s}\in\mathsf{S}_{\Lambda}$ a minimal spanning
tree and bounding their number as in Proposition \ref{P1}, we obtain that we
can choose $c_{8}>0$ such that, for $p\in\left(  p_{0},1\right)  ,$ with
$p_{0}=p_{0}\left(  q,d\right)  :=\frac{1}{1+\frac{e^{c_{8}}}{qc_{3}}%
\frac{c_{8}}{2+c_{8}}},$%
\begin{equation}
\sum_{\mathsf{s}^{\prime}\in\mathsf{S}_{\Lambda}\ :\ \mathsf{s}^{\prime}%
\nsim\mathsf{s}}e^{c_{8}\left\vert \mathsf{s}^{\prime}\right\vert }\left(
\frac{1-p}{p}q\right)  ^{\left\vert \mathsf{s}^{\prime}\right\vert }%
\leq\left\vert \mathsf{s}\right\vert \frac{c_{3}e^{c_{8}}\left(  \frac{1-p}%
{p}q\right)  }{1-c_{3}e^{c_{8}}\left(  \frac{1-p}{p}q\right)  }\leq\frac
{c_{8}}{2}\left\vert \mathsf{s}\right\vert \ .
\end{equation}
Therefore, given $\mathsf{s}\in\mathsf{S}_{\Lambda},$ if $\ell\left(
\mathsf{s}\right)  $ denotes the diameter of $V\left(  \mathsf{s}\right)  $
considered as a subset of $\mathbb{R}^{d},$ since $\ell\left(  \mathsf{s}%
\right)  \leq\left\vert \mathsf{s}\right\vert ,$%
\begin{equation}
\sum_{\mathsf{s}^{\prime}\in\mathsf{S}_{\Lambda}\ :\ \mathsf{s}^{\prime}%
\nsim\mathsf{s}}e^{\frac{c_{8}}{2}\left\vert \mathsf{s}^{\prime}\right\vert
+\frac{c_{8}}{2}\ell\left(  \mathsf{s}^{\prime}\right)  }\left(  \frac{1-p}%
{p}\right)  ^{\left\vert \mathsf{s}^{\prime}\right\vert }q^{\left\Vert
\mathsf{s}^{\prime}\right\Vert _{\#}}\leq\frac{c_{8}}{2}\left\vert
\mathsf{s}\right\vert \ .
\end{equation}
Thus, by the theorem in \cite{KP}, for any $S\subseteq\mathsf{S}_{\Lambda},$%
\begin{equation}
\log\mathfrak{Z}_{q,p}^{\#}\left(  S\right)  =\sum_{S^{\prime}\in
\mathsf{C}\left(  S\right)  }\vartheta_{\#}\left(  S\right)  \;,\quad
\#=\text{f,w} \label{deftheta}%
\end{equation}
where, setting $\mathsf{C}_{\Lambda}:=\mathsf{C}\left(  \mathsf{S}_{\Lambda
}\right)  ,$
\begin{equation}
\mathsf{C}_{\Lambda}\ni S\longmapsto\vartheta_{\#}\left(  S\right)
:=\sum_{S^{\prime}\in\mathcal{P}\left(  S\right)  }\left(  -1\right)
^{\left\vert S\right\vert -\left\vert S^{\prime}\right\vert }\log
\mathfrak{Z}_{q,p}^{\#}\left(  S^{\prime}\right)
\end{equation}
is such that, $\forall\mathsf{s}\in\mathsf{S}_{\Lambda},$%
\begin{equation}
\sum_{S\in\mathsf{C}_{\Lambda}\ :\ S\nsim\mathsf{s}}e^{\frac{c_{8}}{2}%
\sum_{\mathsf{s}^{\prime}\in S}\ell\left(  \mathsf{s}^{\prime}\right)
}\left\vert \vartheta_{\#}\left(  S\right)  \right\vert \leq\frac{c_{8}}%
{2}\left\vert \mathsf{s}\right\vert \ . \label{ext1}%
\end{equation}

Condition (\ref{ext1}) provides the existence of thermodynamics for the
polymer model with partition function $\mathfrak{Z}_{\Lambda}^{\#}\left(
q,p\right)  :=\sum_{\sigma\in\mathfrak{P}_{\Lambda}}\Psi_{\#}\left(
\sigma\right)  ,$ i.e. the existence of the limit $\lim_{\Lambda
\uparrow\mathbb{Z}^{d}}\frac{\log\mathfrak{Z}_{\Lambda}^{\#}\left(
q,p\right)  }{\left\vert \Lambda\right\vert }$ along any cofinal sequence (see
\cite{Ge}) $\left\{  \Lambda\right\}  \uparrow\mathbb{Z}^{d}$ \cite{KP}, this
limit being independent of the boundary conditions.

Considering the realization of the elements of the decomposition of
$\mathbf{S}_{\left\{  0,x\right\}  }$ given in (\ref{Sirrdcp}) as elements of
$\mathsf{S}$ we have

\begin{lemma}
Let $s_{1},s,s_{f},s_{f}^{\prime}$ be realizations of respectively
$\mathbf{s}_{1},\mathbf{s},\mathbf{s}_{f}.$ Then, by (\ref{def_cf_s}) and
(\ref{decPs}) there exists $c_{10}>0$ such that
\begin{equation}
\frac{\mathbb{P}_{q,p}\left(  \left\{  s_{1}\right\}  |\left\{  s\right\}
\cap\left\{  s_{f}\right\}  \right)  }{\mathbb{P}_{q,p}\left(  \left\{
s_{1}\right\}  |\left\{  s\right\}  \cap\left\{  s_{f}^{\prime}\right\}
\right)  }\leq\exp e^{-c_{10}dist\left(  s_{1},s_{f}\triangle s_{f}^{\prime
}\right)  }\ .
\end{equation}

\end{lemma}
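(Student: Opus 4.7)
The plan is to express each conditional probability via the polymer expansion developed immediately above, and show that the difference between the two conditionings arises only from polymer clusters that jointly touch a neighbourhood of $s_{1}$ and the symmetric difference $s_{f}\triangle s_{f}^{\prime}$. Writing
\[
\frac{\mathbb{P}_{q,p}(\{s_{1}\}\mid\{s\}\cap\{s_{f}\})}{\mathbb{P}_{q,p}(\{s_{1}\}\mid\{s\}\cap\{s_{f}^{\prime}\})}
=\frac{\mathbb{P}_{q,p}(\{s_{1}\}\cap\{s\}\cap\{s_{f}\})\,\mathbb{P}_{q,p}(\{s\}\cap\{s_{f}^{\prime}\})}{\mathbb{P}_{q,p}(\{s_{1}\}\cap\{s\}\cap\{s_{f}^{\prime}\})\,\mathbb{P}_{q,p}(\{s\}\cap\{s_{f}\})},
\]
each of the four events fixes a collection of plaquettes, hence (via the dual edges) freezes the closed edges along those surfaces. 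Consequently each numerator/denominator can be represented, up to a common combinatorial factor $(1-p)^{|\cdot|}$ for the frozen edges, as a ratio $\mathfrak{Z}^{\#}_{q,p}(S_{\mathrm{event}})/\mathfrak{Z}^{\#}_{q,p}(\mathsf{S}_{\Lambda})$ of the polymer partition functions (\ref{defZ}) with $\#\in\{\mathrm{f},\mathrm{w}\}$ dictated by the side of the surface on which one evaluates the contribution, where $S_{\mathrm{event}}\subset\mathsf{S}_{\Lambda}$ is obtained by removing the polymers incompatible with the frozen plaquettes.

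The second step is to take logarithms and apply the Kotecký--Preiss expansion (\ref{deftheta}). Each $\log\mathfrak{Z}^{\#}_{q,p}(S_{\mathrm{event}})$ becomes a sum over polymer clusters $S^{\prime}\in\mathsf{C}(S_{\mathrm{event}})$. In the logarithm of the ratio above the four sums are combined. A polymer cluster $S^{\prime}$ that avoids a neighbourhood of $s_{1}$ contributes the same term with opposite signs in numerator and denominator and therefore cancels; likewise a cluster that avoids $s_{f}\triangle s_{f}^{\prime}$ contributes identically in the $s_{f}$- and $s_{f}^{\prime}$-pieces and also cancels. Hence only clusters $S^{\prime}$ that are incompatible with at least one polymer meeting a neighbourhood of $s_{1}$ \emph{and} with at least one polymer meeting a neighbourhood of $s_{f}\triangle s_{f}^{\prime}$ survive; in particular every such $S^{\prime}$ must satisfy $\sum_{\mathsf{s}^{\prime}\in S^{\prime}}\ell(\mathsf{s}^{\prime})\geq \mathrm{dist}(s_{1},s_{f}\triangle s_{f}^{\prime})$.

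For the final step, fix any plaquette $\mathsf{s}_{\star}$ meeting $s_{1}$ and apply the Kotecký--Preiss bound (\ref{ext1}):
\[
\sum_{S^{\prime}\in\mathsf{C}_{\Lambda}:\,S^{\prime}\nsim\mathsf{s}_{\star},\;S^{\prime}\cap\mathcal{N}(s_{f}\triangle s_{f}^{\prime})\neq\varnothing}|\vartheta_{\#}(S^{\prime})|
\leq e^{-\tfrac{c_{8}}{2}\,\mathrm{dist}(s_{1},s_{f}\triangle s_{f}^{\prime})}\sum_{S^{\prime}\nsim\mathsf{s}_{\star}}e^{\tfrac{c_{8}}{2}\sum_{\mathsf{s}^{\prime}\in S^{\prime}}\ell(\mathsf{s}^{\prime})}|\vartheta_{\#}(S^{\prime})|
\leq \tfrac{c_{8}}{2}|\mathsf{s}_{\star}|\,e^{-\tfrac{c_{8}}{2}\,\mathrm{dist}(s_{1},s_{f}\triangle s_{f}^{\prime})}.
\]
Summing over the $O(|s_{1}|)$ choices of $\mathsf{s}_{\star}$ and combining the contributions from the four partition functions, the logarithm of the ratio is bounded in absolute value by $C\,e^{-c_{10}\,\mathrm{dist}(s_{1},s_{f}\triangle s_{f}^{\prime})}$ for some $c_{10}>0$ (absorbing $C$ and $|s_{1}|$ by slightly shrinking $c_{10}$, or keeping a factor which is itself bounded by $e^{-c_{10}\mathrm{dist}}$ up to a constant). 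Exponentiating yields the advertised bound.

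The main obstacle I foresee is bookkeeping rather than conceptual: translating the event specifications $\{s_{1}\},\{s\},\{s_{f}\},\{s_{f}^{\prime}\}$ into the correct choice of subcollection $S_{\mathrm{event}}\subset\mathsf{S}_{\Lambda}$ and of boundary condition $\#\in\{\mathrm{f},\mathrm{w}\}$ on each side of the frozen surfaces, so that the combinatorial prefactors $\prod(1-p)^{|\cdot|}p^{|\cdot|}q^{\kappa}$ really cancel in the four-fold ratio and leave behind a pure difference of $\vartheta_{\#}$-sums. Once this is done, the cancellation argument and the estimate (\ref{ext1}) deliver the decay automatically.
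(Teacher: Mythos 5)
Your proposal follows essentially the same route as the paper: write the four-fold ratio, pass to the polymer partition functions, apply the Kotecký--Preiss expansion so that only clusters $S$ incompatible with both $s_{1}$ and $s_{f}\triangle s_{f}^{\prime}$ survive, and then invoke (\ref{ext1}) with the exponential weight $e^{\frac{c_{8}}{2}\sum\ell}$ to extract decay in the separation. The decomposition into the four-fold ratio, the cancellation mechanism, and the use of (\ref{ext1}) all match the paper's argument.

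There is, however, a genuine gap in your last step. You fix a plaquette $\mathsf{s}_{\star}$ meeting $s_{1}$, obtain the per-plaquette bound
$\tfrac{c_{8}}{2}|\mathsf{s}_{\star}|e^{-\frac{c_{8}}{2}\mathrm{dist}(s_{1},s_{f}\triangle s_{f}^{\prime})}$,
and then sum over the $O(|s_{1}|)$ plaquettes, claiming the resulting factor $|s_{1}|$ can be ``absorbed by slightly shrinking $c_{10}$.'' This does not hold uniformly: a $t$-irreducible piece $s_{1}$ can have cardinality unrelated to $\mathrm{dist}(s_{1},s_{f}\triangle s_{f}^{\prime})$, so the product $|s_{1}|\,e^{-\frac{c_{8}}{2}\mathrm{dist}}$ is not dominated by any $e^{-c_{10}\mathrm{dist}}$ in general. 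The correct way to close this, and what the paper actually does, is to use the cone containments $s_{1}\subset b_{+}(s_{1})-\mathcal{C}_{\varepsilon}(t)$ and $s_{f}\triangle s_{f}^{\prime}\subset b_{2}+\mathcal{C}_{\varepsilon}(t)$: decompose $s_{1}$ and $s_{f}\triangle s_{f}^{\prime}$ into slabs $s_{1}^{(k)}$, $s_{f}^{(l)}$ of thickness $\zeta N$ orthogonal to $t$, observe that the $k$-th slab is at distance roughly $\|b_{1}-b_{2}\|+(k+l)\zeta N$ (up to a cone-angle constant) from the $l$-th slab of the other surface, and that the number of plaquettes in each slab is at most polynomial of order $r^{d-1}$ in its distance to the apex. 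Then the double sum
$\sum_{k,l}e^{-c_{8}\mathrm{dist}(s_{1}^{(k)},s_{f}^{(l)})}$
converges and produces an overall $e^{-c_{8}\|b_{1}-b_{2}\|}$ up to a $(c_{9}\zeta N\int_{0}^{\infty}e^{-c_{8}r}r^{d-1}dr)^{2}$ factor. So your intuition that ``more distant plaquettes contribute less'' is what saves the argument, but it must be made explicit through the cone geometry; as written, replacing $\mathrm{dist}(\mathsf{s}_{\star},s_{f}\triangle s_{f}^{\prime})$ by the weaker lower bound $\mathrm{dist}(s_{1},s_{f}\triangle s_{f}^{\prime})$ for every $\mathsf{s}_{\star}$ throws away precisely the extra decay you need.
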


\begin{proof}
By (\ref{P_L}), (\ref{defZ}), and (\ref{deftheta}), we obtain
\begin{align}
\frac{\mathbb{P}_{q,p}\left(  \left\{  s_{1}\right\}  |\left\{  s\right\}
\cap\left\{  s_{f}\right\}  \right)  }{\mathbb{P}_{q,p}\left(  \left\{
s_{1}\right\}  |\left\{  s\right\}  \cap\left\{  s_{f}^{\prime}\right\}
\right)  }  &  =\frac{\mathbb{P}_{q,p}\left(  \left\{  s_{1}\right\}
\cap\left\{  s\right\}  \cap\left\{  s_{f}\right\}  \right)  }{\mathbb{P}%
_{q,p}\left(  \left\{  s_{1}\right\}  \cap\left\{  s\right\}  \cap\left\{
s_{f}^{\prime}\right\}  \right)  }\frac{\mathbb{P}_{q,p}\left(  \left\{
s\right\}  \cap\left\{  s_{f}^{\prime}\right\}  \right)  }{\mathbb{P}%
_{q,p}\left(  \left\{  s\right\}  \cap\left\{  s_{f}\right\}  \right)  }\\
&  =\exp\left[  \sum_{S\in\mathsf{C}\ :\ S\nsim s_{1}\coprod s\coprod s_{f}%
}\vartheta_{\text{w}}\left(  S\right)  -\sum_{S\in\mathsf{C}\ :\ S\nsim
s\coprod s_{f}}\vartheta_{\text{w}}\left(  S\right)  +\right. \nonumber\\
&  \left.  +\sum_{S\in\mathsf{C}\ :\ S\nsim s\coprod s_{f}^{\prime}}%
\vartheta_{\text{w}}\left(  S\right)  -\sum_{S\in\mathsf{C}\ :\ S\nsim
s_{1}\coprod s\coprod s_{f}^{\prime}}\vartheta_{\text{w}}\left(  S\right)
\right] \nonumber\\
&  =\exp\left[  \sum_{S\in\mathsf{C}\ :\ S\nsim s_{1},S\nsim s_{f}}%
\vartheta_{\text{w}}\left(  S\right)  -\sum_{S\in\mathsf{C}\ :\ S\nsim
s_{1},S\nsim s_{f}^{\prime}}\vartheta_{\text{w}}\left(  S\right)  \right]
\nonumber\\
&  =\exp\sum_{S\in\mathsf{C}\ :\ S\nsim s_{1},S\nsim s_{f}\triangle
s_{f}^{\prime}}\vartheta_{\text{w}}\left(  S\right)  \ .\nonumber
\end{align}
Since, by definition of $s_{1},s_{f}$ and $s_{f}^{\prime},$ there exists
$\varepsilon>0$ and $b_{+}\left(  s_{1}\right)  ,b_{-}\left(  s_{f}\right)
,b_{-}\left(  s_{f}^{\prime}\right)  \in\mathbb{Z}^{d}$ such that
$s_{1}\subset b_{+}\left(  s_{1}\right)  -\mathcal{C}_{\varepsilon}\left(
t\right)  ,s_{f}\subset b_{-}\left(  s_{f}\right)  +\mathcal{C}_{\varepsilon
}\left(  t\right)  $ and $s_{f}^{\prime}\subset b_{-}\left(  s_{f}^{\prime
}\right)  +\mathcal{C}_{\varepsilon}\left(  t\right)  ,$

for any $\mathsf{s}\in S$ such that $S\in\mathsf{C}$ and $S\nsim s_{1}\bigvee
s_{f},\ell\left(  \mathsf{s}\right)  \geq\left\Vert b_{+}\left(  s_{1}\right)
-b_{-}\left(  s_{f}\right)  \right\Vert .$ Hence, given $\zeta\in\left(
0,1\right)  ,$ let us define, for any $l\geq0,\mathcal{S}_{l\zeta N,b_{2}}%
^{t}:=\mathcal{H}_{b_{2}+\hat{t}l\zeta N}^{t,+}\cap\mathcal{H}_{b_{2}+\hat
{t}\left(  l+1\right)  \zeta N}^{t,-}$ and $\mathcal{S}_{l\zeta N,b_{1}}%
^{t}:=\mathcal{H}_{b_{1}-\hat{t}l\zeta N}^{t,-}\cap\mathcal{H}_{b_{1}-\hat
{t}\left(  l+1\right)  \zeta N}^{t,+},$ where $b_{2}$ is the element of the
set $\left\{  b_{-}\left(  s_{f}\right)  ,b_{-}\left(  s_{f}^{\prime}\right)
\right\}  $ closer to $b_{1}:=b_{+}\left(  s_{1}\right)  $ w.r.t. the
Euclidean distance. Setting $s_{1}^{\left(  l\right)  }:=s_{1}\cap
\mathcal{S}_{l\zeta N,b_{1}}^{t},s_{f}^{\left(  k\right)  }:=\left(
s_{f}\triangle s_{f}^{\prime}\right)  \cap\mathcal{S}_{l\zeta N,b_{1}}^{t},$
there exists $c_{9}=c_{9}\left(  \varepsilon\right)  >0$ such that, by
(\ref{ext1}), we have%
\begin{align}
&  \sum_{S\in\mathsf{C}\ :\ S\nsim s_{1},S\nsim s_{f}\triangle s_{f}^{\prime}%
}\left\vert \vartheta_{\text{w}}\left(  S\right)  \right\vert =\sum_{k,l\geq
0}\sum_{S\in\mathsf{C}\ :\ S\nsim\mathsf{s}\ \text{s.t. }\mathsf{s}\in
s_{1}^{\left(  k\right)  }\bigvee s_{f}^{\left(  l\right)  }}\left\vert
\vartheta_{\text{w}}\left(  S\right)  \right\vert \\
&  \leq\sum_{k,l\geq0}e^{-c_{8}dist\left(  s_{1}^{\left(  k\right)  }%
,s_{f}^{\left(  l\right)  }\right)  }\sum_{S\in\mathsf{C}\ :\ S\nsim
\mathsf{s}\ \text{s.t. }\mathsf{s}\in s_{1}^{\left(  k\right)  }\bigvee
s_{f}^{\left(  l\right)  }}\left\vert \vartheta_{\text{w}}\left(  S\right)
\right\vert e^{c_{8}\sum_{\mathsf{s}^{\prime}\in S}\ell\left(  \mathsf{s}%
^{\prime}\right)  }\nonumber\\
&  \leq e^{-c_{8}\left\Vert b_{1}-b_{2}\right\Vert }\left(  c_{9}\zeta
N\int_{0}^{\infty}dre^{-c_{8}r}r^{d-1}\right)  ^{2}\ .\nonumber
\end{align}

\end{proof}

A straightforward consequence of this result is the following

\begin{proposition}
\label{P2}There exists $\theta=\theta\left(  p,q,d\right)  \in\left(
0,1\right)  $ and two positive constants $c_{11},c_{12}$ such that uniformly
in $v\in\mathfrak{dK}_{q,p},t$-irreducible subsets $s_{1},s_{b},$ strings of
$t$-irreducible subsets $\underline{s},\underline{s}^{\prime},$ and pairs of
$t$-irreducible subsets $s_{f},s_{f}^{\prime}:$%
\begin{gather}
c_{11}\leq g\left(  \underline{s}|s_{b},s_{f}\right)  \leq\frac{1}{c_{11}%
}\ ,\\
\left\vert g\left(  s_{1},\underline{s}|s_{b},s_{f}\right)  -g\left(
s_{1},\underline{s}^{\prime}|s_{b},s_{f}^{\prime}\right)  \right\vert \leq
c_{12}\theta^{\mathbf{i}\left(  \underline{s},\underline{s}^{\prime}\right)
}\ ,\\
\left\vert \Xi_{v}\left(  s_{1},\underline{s}|s_{b},s_{f}\right)  -\Xi
_{v}\left(  s_{1},\underline{s}^{\prime}|s_{b},s_{f}^{\prime}\right)
\right\vert \leq c_{12}\theta^{\mathbf{i}\left(  \underline{s},\underline
{s}^{\prime}\right)  }\ .
\end{gather}

\end{proposition}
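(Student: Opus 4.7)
The plan is to deduce all three estimates from the cluster representation obtained in the preceding Lemma together with the KP decay bound (\ref{ext1}). The key observation is that every $t$-irreducible piece $s_j$ advances the position by at least one lattice step in the direction $t$ (the point $b_j+u$ lies strictly above $b_j$ in the $t$-coordinate), so after $k$ pieces one has moved spatially by at least $k\|u\|$; hence if $\mathbf{i}(\underline{s},\underline{s}^\prime)=k$ any symmetric-difference region between the two conditionings lies at Euclidean distance at least $(k-1)\|u\|$ from the fixed initial piece $s_1$ (or $s_b$). This geometric bookkeeping will convert the spatial exponential decay produced by (\ref{ext1}) into the required $\theta^{\mathbf{i}(\underline{s},\underline{s}^\prime)}$-bound.

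For the uniform bound on $g$, I would repeat the polymer-expansion computation in the preceding Lemma with the denominator replaced by $\mathbb{P}_{q,p}\{s_b\}$ (i.e.\ unconditioning on the whole downstream configuration). By (\ref{P_L}), (\ref{defZ}) and (\ref{deftheta}) the logarithm of $g(\underline{s}|s_b,s_f)$ is expressible as a finite signed sum of cluster weights $\vartheta_{\text{w}}(S)$ restricted to clusters that are incompatible with $s_b$ and simultaneously incompatible with $\underline{s}\coprod s_f$. Applying (\ref{ext1}) directly, this sum is bounded in absolute value by $\tfrac{c_8}{2}|s_b|$, which is finite (indeed uniformly bounded by a constant depending only on the minimal size of an irreducible $s_b$). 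Exponentiating yields positive constants $c_{11},1/c_{11}$ independent of all parameters.

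For the Hölder bound on $g$, I would write
\begin{equation*}
\log\frac{g(s_1,\underline{s}|s_b,s_f)}{g(s_1,\underline{s}^\prime|s_b,s_f^\prime)}
\end{equation*}
as a difference of cluster sums, and cancel the clusters that touch neither the symmetric differences $\underline{s}\triangle\underline{s}^\prime$ nor $s_f\triangle s_f^\prime$. The surviving clusters must contain a polymer incompatible with $s_b$ and simultaneously with the symmetric difference region; since $\underline{s}$ and $\underline{s}^\prime$ coincide in the first $\mathbf{i}(\underline{s},\underline{s}^\prime)-1$ positions, and each irreducible piece advances in the $t$-direction by at least $\|u\|$, any such polymer must have diameter at least of order $\mathbf{i}(\underline{s},\underline{s}^\prime)$. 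Slicing the relevant region into slabs $\mathcal{S}^t_{l\zeta N,\cdot}$ as in the preceding Lemma and inserting (\ref{ext1}) slab by slab yields a bound
\begin{equation*}
\Bigl|\log\frac{g(s_1,\underline{s}|s_b,s_f)}{g(s_1,\underline{s}^\prime|s_b,s_f^\prime)}\Bigr|\leq C\,e^{-c\,\mathbf{i}(\underline{s},\underline{s}^\prime)},
\end{equation*}
from which, using boundedness of $g$, the required linearized inequality with $\theta=e^{-c}$ follows. The very same cluster-expansion argument, applied to the factor $\mathbb{P}_{q,p}(\{s_1\}|\cap_{j\geq 2}\{s_j\}\cap\{s_f\})$ appearing in the definition of $\Xi_v$, gives the third inequality; the two deterministic factors $e^{\langle v,X(s_1)\rangle}$ and $\mathbb{P}^{\text{f}}_{q,p;s_1^\ast}(\Omega_1)$ depend only on $s_1$ and therefore cancel identically in the difference.

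The main obstacle is the geometric step linking the combinatorial index $\mathbf{i}(\underline{s},\underline{s}^\prime)$ to an honest spatial separation: one must rule out the possibility that, although the realizations first differ only at position $\mathbf{i}$, the physical locations of $s_\mathbf{i}$ and $s^\prime_\mathbf{i}$ are close to $s_1$. This is exactly what the cone-containment in Definition \ref{d.cpts} and the minimal $t$-advance of an irreducible piece guarantee, so $\mathrm{dist}(s_1, s_\mathbf{i}\triangle s_\mathbf{i}^\prime)\geq c\,\mathbf{i}(\underline{s},\underline{s}^\prime)$ holds deterministically on the event of an $\varepsilon$-irreducible decomposition. The remainder of the argument is then a bookkeeping application of (\ref{ext1}) identical in spirit to the preceding Lemma.
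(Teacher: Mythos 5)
Your proposal follows essentially the same route as the paper intends: the paper states Proposition \ref{P2} as "a straightforward consequence" of the preceding Lemma, and your fleshing out via the polymer/cluster expansion, the Koteck\'y–Preiss bound (\ref{ext1}), and the cone-geometry observation that the index $\mathbf{i}(\underline{s},\underline{s}^{\prime})$ forces a proportional spatial separation, is the right machinery. The treatment of the two H\"older bounds — slicing into slabs, applying (\ref{ext1}) slab by slab, and noting that the two deterministic prefactors in $\Xi_{v}$ depend only on $s_{1}$ and cancel in the difference — is sound.

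There is, however, a genuine gap in your argument for the uniform bound $c_{11}\leq g\leq 1/c_{11}$. You bound the cluster sum for $\log g$ by $\tfrac{c_{8}}{2}\left\vert s_{b}\right\vert$ via (\ref{ext1}) and then assert that $\left\vert s_{b}\right\vert$ is ``uniformly bounded by a constant depending only on the minimal size of an irreducible $s_{b}$.'' This is false: $t$-backward irreducibility of $s_{b}$ means the absence of intermediate cone points, which places no upper bound on the number of plaquettes in $s_{b}$; irreducible pieces can be arbitrarily large. Consequently the crude estimate $\tfrac{c_{8}}{2}\left\vert s_{b}\right\vert$ does \emph{not} yield a uniform constant, and exponentiating it does not give the claimed $c_{11}$. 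The correct argument is precisely the one you already use for the H\"older bounds (and which the paper uses to prove the preceding Lemma): slice $s_{b}\subset b_{1}-\mathcal{C}_{\varepsilon}(t)$ and $\underline{s}\coprod s_{f}\subset b_{1}+u+\mathcal{C}_{\varepsilon}(t)$ into slabs $\mathcal{S}^{t}_{l\zeta N,b_{1}}$; a cluster contributing to $\log g$ must be incompatible with some slab of $s_{b}$ at distance $l$ from $b_{1}$ and some slab of $\underline{s}\coprod s_{f}$ at distance $k$, hence has diameter $\gtrsim l+k$, so by the weighted form of (\ref{ext1}) its contribution carries a factor $e^{-c(l+k)}$; since the cone containment makes the slab sizes grow only polynomially in $l$ and $k$, the double sum converges to a constant independent of $\left\vert s_{b}\right\vert$. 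With this replacement, your proof is correct.
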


\subsection{Exact asymptotics of finite connections}

We refer to \cite{CIV1} section 5 for the derivation of local limit type
results associated with Ruelle's operators on countable alphabets.

Let $p\in\left(  p_{0}\vee p^{\ast},1\right)  .$ For any $n\in\mathbb{N},$ the
measure on $\mathcal{I}_{t}^{n},$%
\begin{equation}
\nu_{n}^{v}\left(  s_{1},..,s_{n}|s_{b},s_{f}\right)  :=e^{\sum_{i=1}^{n}%
\Xi_{v}\left(  s_{i},..,s_{n};s_{f}\right)  }g\left(  s_{1},..,s_{n}%
;s_{b},s_{f}\right)
\end{equation}
allow us to represent (\ref{e^tP}) as
\begin{align}
e^{\tau_{q,p}\left(  x\right)  }\mathbb{P}_{q,p}\left\{  0\longleftrightarrow
x\ ,\ \left\vert \mathbf{C}_{\{0,x\}}\right\vert <\infty\right\}   &
=\sum_{y\in\mathcal{H}_{0}^{v,+}\cap\mathcal{H}_{0}^{t,+}}\sum_{z\in
\mathcal{H}_{x}^{v,-}\cap\mathcal{H}_{x}^{t,-}}\sum_{s_{b}\Supset-y}%
\sum_{s_{f}\Supset x-z}\mathbb{P}_{q,p}\left(  \Omega_{b}\right)
\mathbb{P}_{q,p}\left(  \Omega_{f}\right)  e^{\left\langle v,x-\left(
z-y\right)  \right\rangle }\times\label{e^tPn}\\
&  \times\sum_{n\geq1}\nu_{n}^{v}\left(  \sum_{i=1}^{n}X\left(  s_{i}\right)
=z-y|s_{b},s_{f}\right)  \ .\nonumber
\end{align}
Because Proposition \ref{expt} implies
\begin{equation}
\sum_{s_{b}\Supset-u}\mathbb{P}_{q,p}\left(  \Omega_{b}\right)
e^{\left\langle v,u\right\rangle }\leq e^{-c_{7}\left\Vert u\right\Vert
}\;;\;\sum_{s_{f}\Supset u}\mathbb{P}_{q,p}\left(  \Omega_{f}\right)
e^{\left\langle v,u\right\rangle }\leq e^{-c_{7}\left\Vert u\right\Vert }%
\end{equation}
uniformly in $u\in\mathbb{Z}^{d},$ the main contribution of the r.h.s. of
(\ref{e^tP}) comes from the last sum in (\ref{e^tPn}) when $z-y$ is close to
$x$ and $n$ is close to the optimal value. Therefore, proceding as in section
4.1 of \cite{CIV2} we have
\begin{equation}
\sum_{n\geq1}\nu_{n}^{v}\left(  \sum_{i=1}^{n}X\left(  s_{i}\right)
=z-y|s_{b},s_{f}\right)  =\frac{\Theta_{q,p}\left(  \hat{x}\right)  }%
{\sqrt{\left(  2\pi\left\Vert x\right\Vert \right)  ^{d-1}}}F\left(
s_{b}\right)  F\left(  s_{f}\right)  \left(  1+o\left(  1\right)  \right)  \ ,
\end{equation}
where $\Theta$ is a locally analytic positive function defined on a
neighborhood of $\hat{x}$ in $\mathbb{S}^{d-1}$ and $F$ is a function on the
set of all the possible realizations of $t$-backward and $t$-forward subsets
$s_{b}$ and $s_{f}$ which is bounded above and below.

This proves Theorem \ref{thm1} with%

\begin{equation}
\Phi_{q,p}\left(  \hat{x}\right)  =\Theta_{q,p}\left(  \hat{x}\right)  \left(
\sum_{u\in\mathcal{H}_{0}^{v,+}\cap\mathcal{H}_{0}^{t,+}}\sum_{s_{f}\Supset
u}\mathbb{P}_{q,p}\left(  \Omega_{f}\right)  e^{\left\langle v,u\right\rangle
}F\left(  s_{f}\right)  \right)  ^{2}\ .
\end{equation}

\end{document}